\definecolor{light-gray}{gray}{0.7}
\theoremstyle{plain}
\newtheorem{theorem}{Theorem}
\newtheorem{lemma}[theorem]{Lemma}
\newtheorem{corollary}[theorem]{Corollary}
\newtheorem{conjecture}{Conjecture}
\theoremstyle{definition}
\newcommand{\seqnum}[1]{\href{http://oeis.org/#1}{\underline{#1}}}
\title{Chocolate Numbers}
\author{Caleb Ji, Tanya Khovanova, Robin Park, Angela Song}
\date{\today}
\begin{document}

\maketitle

\begin{abstract}
In this paper, we consider a game played on a rectangular $m \times n$ gridded chocolate bar. Each move, a player breaks the bar along a grid line.  Each move after that consists of taking any piece of chocolate and breaking it again along existing grid lines, until just $mn$ individual squares remain.

This paper enumerates the number of ways to break an $m \times n$ bar, which we call \textit{chocolate numbers}, and introduces four new sequences related to these numbers. Using various techniques, we prove interesting divisibility results regarding these sequences.
\end{abstract}

\section{Introduction}

The following puzzle involving the breaking of a chocolate bar is very famous. It appears on thousands of webpages, but we could not trace its history.

\begin{quote}
Given a rectangular $m \times n$ chocolate bar, you want to break it into the $1 \times 1$ squares. You are only allowed to break it along the grid lines and you cannot break two or more pieces at once. What is the smallest number of breaks that you need?
\end{quote}

Surprisingly, the number of breaks does not depend on the way you break: it is predetermined.  

Let us convert the puzzle to a game played by two people. We call this game the \textit{Chocolate Bar} game.  They break the chocolate in turns and the person who cannot move first loses. This game may at first seem uninteresting, as both the number of moves and the winner is predetermined. However, this game becomes more complex if we start to ask different questions. For example, how many ways are there to play this game?

This question of finding the number of ways to break a chocolate bar was suggested to us by Prof.~James Propp. In his paper, ``Games of No Strategy and Low-Grade Combinatorics'' \cite{Propp}, he counts the number of ways to play games where the winner is predetermined, or, as he calls it, \textit{games of no strategy}. As he also shows, the number of end positions provides numerous interesting combinatorial questions as well \cite{Propp}. These games now comprise a part of the vast field of \textit{impartial combinatorial games}. \cite{BCG, ANW, Siegel}.

The Chocolate Bar game can be considered as an Impartial Cutcake game. In the classic Cutcake game, played by two players, the first player is allowed to make only vertical breaks on a rectangular, gridded cake, while the second player can only make horizontal breaks. The last player able to break the cake is the winner \cite{ANW, BCG}.

In this paper, we study Chocolate Numbers, or the number of ways to play the Chocolate Bar game on multiple sizes of bars. We first introduce four new sequences that stem from Chocolate Numbers, then, through various techniques, we prove multiple divisibility and periodicity rules for these sequences.

In Section~\ref{sec:rcn} we find a recursive formula for the Chocolate Numbers, and in Section~\ref{sec:seq} we list and describe the sequences we study. Section~\ref{sec:div} proves properties of these numbers involving divisibility by 2, while Section~\ref{sec:div2} focuses on Chocolate Numbers for $2 \times n$ chocolate bars, proving further divisibility properties.  Finally, the rest of Section~\ref{sec:div2} studies this sequence's generating function and utilizes hypergeometric functions and all previous knowledge to prove or conjecture its periodicity when divided by any number.

\section{The Recursion for Chocolate Numbers}\label{sec:rcn}

Though it may initially seem surprising, it is a well-known fact that the number of breaks needed to dissect a rectangular chocolate grid is invariant however the breaks are made.

\begin{lemma}
\label{L1}
In an $m\times n$ grid, the number of breaks necessary to dissect it into $mn$ unit squares is $mn-1$.
\end{lemma}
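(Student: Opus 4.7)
My plan is to prove this by a simple invariant argument on the number of pieces, rather than by induction on $m$ and $n$ separately.

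The key observation is that a single break, by definition of the game, takes exactly one piece of chocolate and splits it into two pieces along a grid line. Thus each break increases the total number of pieces on the table by exactly one. I would state this explicitly as the invariant: if $P_k$ denotes the number of pieces after $k$ breaks, then $P_k = P_{k-1} + 1$, so $P_k = 1 + k$.

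Since we begin with $P_0 = 1$ piece (the whole bar) and end when every piece is a $1 \times 1$ square, the terminal configuration has exactly $mn$ pieces. Setting $P_k = mn$ and solving gives $k = mn - 1$, regardless of the sequence of breaks chosen. This simultaneously establishes both that $mn - 1$ breaks suffice and that no fewer (and no more) will do, which also recovers the remark in the introduction that the number of breaks is predetermined.

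There is essentially no obstacle here; the only thing to be careful about is justifying that a ``break along a grid line'' of a single rectangular (or polyomino) piece does produce exactly two pieces, which follows because a straight grid-line cut through one connected rectangular piece separates it into two rectangular subpieces and cannot produce more. Once this is noted, the counting argument is immediate and the proof is a couple of sentences.
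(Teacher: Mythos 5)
Your proof is correct and is essentially the same argument as the paper's: each break increases the piece count by exactly one, so going from $1$ piece to $mn$ pieces forces exactly $mn-1$ breaks. Your extra remark that a grid-line cut of one piece yields exactly two pieces is a fine (if implicit in the paper) justification of the invariant.
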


\begin{proof}
Each break increases the number of pieces by one.  Since we begin with $1$ piece and end with $mn$ pieces, $mn-1$ breaks are required.
\end{proof}

For the rest of the paper we will study not the number of breaks, but instead the number of ways to break.

Consider an $m \times n$ rectangular grid and the number $A_{m, n}$ of distinct ways this grid can be broken along horizontal or vertical grid lines into $mn$ unit squares.  Each break divides one piece into two, and in each successive break, a single piece is chosen and is broken along a grid line it contains.  Two breaks are considered distinguishable if they break different pieces or along different grid lines.

We call the numbers $A_{m, n}$ the \textit{Chocolate Numbers}.

For example, consider a $2 \times 2$ grid. There are four ways to dissect this grid into unit squares: The first break can either be horizontal or vertical, and the second and third breaks depend on which piece of the grid one chooses next. Thus, $A_{2, 2} = 4$. 
Figure~\ref{fig:A22} shows all possible ways to break a 2 by 2 chocolate bar.

\begin{figure}[htbp]
\centering
    \includegraphics[scale=0.4]{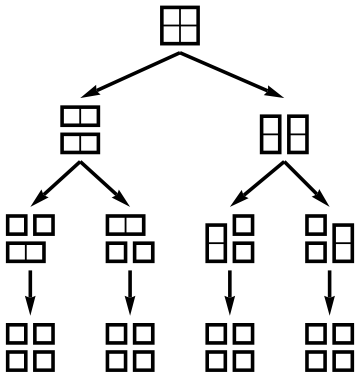}
    \caption{$A_{2, 2}=4$}\label{fig:A22}
\end{figure}

\newpage 

Note that breaking chocolate is a recursive procedure.  Indeed, since every break breaks a bar into two smaller pieces, we obtain the following recursive formula for $A_{m, n}$.

\begin{theorem}
\label{Th2}
\[A_{m, n} = \sum_{i=1}^{m-1} \binom{mn-2}{in-1} A_{i, n}A_{m-i, n} + \sum_{i=1}^{n-1} \binom{mn-2}{im-1} A_{i, m}A_{n-i, m}.\]
\end{theorem}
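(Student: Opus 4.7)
The plan is to count $A_{m,n}$ by conditioning on the very first break. By Lemma~\ref{L1}, we must make $mn-1$ breaks in total, and the first break is either horizontal, separating rows $i$ and $i+1$ for some $1 \leq i \leq m-1$, or vertical, separating columns $i$ and $i+1$ for some $1 \leq i \leq n-1$. These possibilities are mutually exclusive and exhaustive, so the count naturally splits into the two sums appearing in the statement.

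Next I would analyze a fixed first move, say a horizontal break after row $i$. This move produces two independent subrectangles of sizes $i \times n$ and $(m-i) \times n$, which must then be broken into unit squares. The key observation is that the remaining $mn-2$ breaks can be packaged as a pair consisting of (a) the ordered sequence of breaks applied to the top piece and (b) the ordered sequence of breaks applied to the bottom piece, together with a shuffle that records, at each step, which of the two pieces is acted on next. By Lemma~\ref{L1}, the top piece requires exactly $in-1$ breaks and the bottom piece requires exactly $(m-i)n-1$ breaks, which indeed sum to $mn-2$. By definition, the number of ordered break sequences for the two subpieces are $A_{i,n}$ and $A_{m-i,n}$, and the number of shuffles is $\binom{mn-2}{in-1}$. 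Multiplying gives the contribution $\binom{mn-2}{in-1} A_{i,n} A_{m-i,n}$ for this choice of first horizontal break.

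The same argument applies verbatim to a vertical first break between columns $i$ and $i+1$, producing subrectangles of sizes $m \times i$ and $m \times (n-i)$, contributing $\binom{mn-2}{im-1} A_{i,m} A_{n-i,m}$ after invoking the symmetry $A_{p,q}=A_{q,p}$ (which is clear from the definition, since the bar has no preferred orientation). Summing over all first moves of both types yields the claimed recursion.

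The only subtlety I would be careful to write down precisely is the bijection underlying the shuffle factor: one must argue that an arbitrary completion of breaks of the full bar after the first horizontal cut corresponds uniquely to a triple (top-breaking sequence, bottom-breaking sequence, shuffle pattern). This is immediate because each subsequent break acts on exactly one of the two disjoint subpieces and cannot create interactions between them, so the projections onto the two pieces recover the component sequences, and the record of which piece was chosen at each step recovers the shuffle. Everything else is bookkeeping with binomial coefficients.
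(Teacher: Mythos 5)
Your proposal is correct and follows essentially the same route as the paper: conditioning on the first break, using Lemma~\ref{L1} to get the sizes $in-1$ and $(m-i)n-1$ of the two remaining break sequences, and counting their interleavings with the binomial coefficient $\binom{mn-2}{in-1}$. Your explicit justification of the shuffle bijection is a nice touch that the paper's proof leaves implicit.
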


\begin{proof}
\textbf{Case 1:} The first move breaks the bar along a horizontal grid line.

\noindent If this break is along the $i^\text{th}$ grid line with $1\le i\le m-1$, then an $i\times n$ and an $(m-i)\times n$ bar remain.  By Lemma~\ref{L1}, the first sub-grid will take $in-1$ moves to finish, while the second will take $(m-i)n-1$ moves to finish.  Thus there are a total of $mn-2 \choose im-1$ ways to choose which moves will be on parts of which of the two grids in the remaining $mn-2$ moves.  For each of these ways, there are $A_{i, n}A_{n-i, n}$ ways to decide how the two grids are dissected.  Summing over all $i$, this gives a total of  

\[\sum_{i=1}^{m-1} {mn-2 \choose in-1}A_{i, n}A_{n-i, n}\]
ways to dissect in this case.

\textbf{Case 2:} The first move breaks the bar along a vertical grid line.

\noindent Similarly, there are

\[\sum\limits_{i=1}^{n-1} {mn-2 \choose im-1}A_{i, m}A_{m-i, m}\]
ways to dissect in this case.

Thus, the recursion for the number of ways to break an $m \times n$ bar is \[A_{m,n}=\sum\limits_{i=1}^{m-1} {mn-2 \choose in-1}A_{i, n}A_{m-i, n}+\sum\limits_{i=1}^{n-1} {mn-2 \choose im-1}A_{m, i}A_{m, n-i}.\]
\end{proof}

\begin{corollary}
\label{C3}
$A_{1, n} = A_{n, 1} = (n-1)!$.
\end{corollary}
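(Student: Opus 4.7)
The plan is to give a direct combinatorial argument and then cross-check it against the recursion from Theorem~\ref{Th2}. A $1 \times n$ bar has exactly $n-1$ grid lines (all parallel), and by Lemma~\ref{L1} we must make exactly $n-1$ breaks in order to produce the $n$ unit squares. So the natural move is to exhibit a bijection between valid sequences of breaks and orderings of the $n-1$ grid lines.

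First I would observe that at every stage of the breaking process, every piece still has the form $1 \times k$ for some $k \leq n$. In particular, each of the $n-1$ original grid lines that has not yet been broken lies in a unique current piece, and breaking it splits that piece into two smaller $1 \times k$ pieces without affecting any other grid line. Consequently, at any moment we may break \emph{any} remaining grid line, and a sequence of breaks is determined by nothing more than the order in which the $n-1$ grid lines are broken (the piece and the location of each break are forced by the grid line chosen). Since the $n-1$ grid lines are distinguishable, this gives $A_{1,n} = (n-1)!$, and by reflecting the bar across its diagonal we get $A_{n,1} = A_{1,n} = (n-1)!$.

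There is no real obstacle here; the only subtlety is checking that the breaking procedure never blocks any remaining grid line, which follows immediately from the shape of the intermediate pieces. As a sanity check, I would plug $m=1$ into Theorem~\ref{Th2}: the first sum is empty, and the second reduces to
\[
A_{1,n} \;=\; \sum_{i=1}^{n-1} \binom{n-2}{i-1} A_{1,i}\, A_{1,n-i}.
\]
Assuming inductively that $A_{1,k} = (k-1)!$ for $k < n$, each term equals $\binom{n-2}{i-1}(i-1)!\,(n-i-1)! = (n-2)!$, and summing over the $n-1$ values of $i$ yields $(n-1)(n-2)! = (n-1)!$, confirming consistency with the recursion.
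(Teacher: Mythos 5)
Your proof is correct and takes essentially the same approach as the paper, which derives $A_{1,n}=(n-1)!$ by noting that on the $k$th move there are $n-k$ grid lines to choose from, i.e.\ the breaks are just an ordering of the $n-1$ grid lines. Your additional consistency check against the recursion of Theorem~\ref{Th2} is valid but not needed beyond the paper's argument.
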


This can also be seen by noting that on the $k^\text{th}$ move, there are $n-k$ grid lines to choose from.  Multiplying them all as $k$ ranges from $1$ through $n-1$ gives the result.

 The next corollary is clear without invoking the recursion as it is due to the symmetry of the problem.

\begin{corollary}
\label{C4}
$A_{m, n} = A_{n, m}$.
\end{corollary}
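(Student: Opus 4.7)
The plan is to give an explicit bijection, as the authors already hint. Geometrically, an $m \times n$ chocolate bar becomes an $n \times m$ bar under a $90^\circ$ rotation; this rotation sends horizontal grid lines to vertical grid lines and vice versa, and carries pieces to pieces. I would define a map on break sequences by applying the rotation move by move: a sequence of breaks $b_1, b_2, \ldots, b_{mn-1}$ on the $m \times n$ bar is sent to the sequence of rotated breaks, performed in the same order on the rotated bar. Each break remains legal after rotation, since a legal break selects an existing sub-piece and cuts it along a grid line, and both the collection of sub-pieces and the set of grid lines are permuted by the rotation. The inverse map is induced by the $-90^\circ$ rotation, so the map is a bijection, yielding $A_{m,n} = A_{n,m}$.

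For readers who prefer an algebraic check, I would note that the recursion in Theorem~\ref{Th2} is itself manifestly symmetric under $(m,n) \leftrightarrow (n,m)$: swapping $m$ and $n$ merely exchanges the two summations on the right-hand side. Together with the symmetric base case $A_{1,n} = A_{n,1} = (n-1)!$ from Corollary~\ref{C3}, a routine induction on $mn$ delivers the equality.

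I do not anticipate a real obstacle; the only decision is stylistic, between presenting the bijective proof or the inductive one. I would lead with the rotation bijection, which is essentially a single sentence and emphasizes why symmetry of the recursion was inevitable, and mention the inductive verification only in passing.
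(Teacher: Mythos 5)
Your rotation bijection is exactly the paper's argument: the paper dispenses with a proof entirely, noting the corollary ``is clear without invoking the recursion as it is due to the symmetry of the problem,'' which is precisely the geometric symmetry you spell out. Your proposal is correct and takes essentially the same approach, merely making the implicit bijection explicit (and adding an optional inductive check the paper does not need).
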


\subsection{Some values}

The values for  $A_{m,n}$ with small indices are presented in Table~\ref{tbl:Amn}.

{\small 
\begin{table}[htbp]
\begin{center}
\begin{tabular}{c | c | c | c | c | c |}
 $A_{m, n}$ & \bfseries 1 & \bfseries 2 & \bfseries 3 & \bfseries 4 & \bfseries 5\\
\hline
\bfseries 1 & 1 & 1 & 2 & 6 & 24 \\ 
\hline
\bfseries 2 & 1 & 4 & 56 & 1712 & 92800 \\ 
\hline
\bfseries 3 & 2 & 56 & 9408 & 4948992 & 6085088256 \\ 
\hline
\bfseries 4 & 6 & 1712 & 4948992 & 63352393728 & 2472100837326848 \\ 
\hline
\bfseries 5 & 24 & 92800 & 6085088256 & 2472100837326848 & 3947339798331748515840 \\ 
\hline
\end{tabular}
\caption{Values of $A_{m, n}$}\label{table:f}\label{tbl:Amn}
\end{center}
\end{table}
}

In the following section we introduce four sequences related to these numbers.

\section{Chocolate Sequences}
\label{sec:seq}

\subsection{Chocolate numbers}

The following new sequence, \seqnum{A261746} in the OEIS, lists the numbers equal to $A_{m, n}$ for some $m$, $n$ in increasing order: 1, 2, 4, 6, 24, 56, 120, 720, 1712, 5040, 9408, 40320, 92800, 362880, 
3628800, 4948992, 7918592, 39916800, 479001600, 984237056, 6085088256, $\ldots$.

The sequence of factorials, \seqnum{A000142}, is a subsequence of this sequence due to Corollary~\ref{C3}.

\subsection{Chocolate triangle}

The new sequence, \seqnum{A261964} in the OEIS, reads the sequence $A_{m, n}$ as a triangle by rows: 1, 1, 1, 2, 4, 2, 6, 56, 56, 6, 24, 1712, 9408, 1712, 24, 120, 92800, 4948992, 4948992, 92800, 120, 720, 7918592, 6085088256, 63352393728, 6085088256, 7918592, 720, $\ldots$.

As with the previous sequence, the sequence of factorials, A000142, is a subsequence of this sequence due to Corollary~\ref{C3}.

\subsection{Chocolate-2 numbers}
As we mentioned before, the first row in Table~\ref{table:f} is the factorials. The next row is a new sequence, \seqnum{A261747} in the OEIS. It lists $B_n=A_{2, n}$, the number of ways to break a 2 by $n$ chocolate bar: 1, 4, 56, 1712, 92800, 7918592, 984237056, 168662855680, 
38238313152512, 11106033743298560, 4026844843819663360,  $\ldots$.

\subsection{Chocolate square numbers}

The following new sequence, \seqnum{A257281} in the OEIS, lists the numbers $A_{n, n}$, the number of ways to break a square chocolate bar with side length $n$: 1, 4, 9408, 63352393728, 3947339798331748515840, 
5732998662938820430255187886059028480, \newline
417673987760293241182652126617960927525362518081132298240, $\ldots$.

\section{Divisibility}
\label{sec:div}

Interesting patterns can be found by examining the divisibility of sequences generated by $A_{m,n}$. Table~\ref{table:g} shows the factorizations of $A_{m,n}$, where rows correspond to $m$ and columns correspond to $n$. We see that the numbers are highly divisible; for example, most of the numbers are divisible by 2.

\begin{table}[htbp]
\begin{center}
\begin{tabular}{c | c | c | c | c |}
 $A_{m, n}$ & \bfseries 1 & \bfseries 2 & \bfseries 3 & \bfseries 4\\
\hline
\bfseries 1 & 1 & 1 & 2 & $2\cdot 3$\\ 
\hline
\bfseries 2 & 1 & $2^2$ & $2^3\cdot 7$ & $2^4\cdot 107$\\ 
\hline
\bfseries 3 & 2 & $2^3\cdot 7$ & $2^6\cdot 3^1\cdot 7^2$ & $2^{10}\cdot 3^3\cdot 179$\\ 
\hline
\bfseries 4 & $2\cdot 3$ & $2^4\cdot 107$ & $2^{10}\cdot 3^3\cdot 179$ & $2^{12}\cdot 3\cdot 13\cdot 19\cdot 20873$\\ 
\hline
\end{tabular}
\caption{$A_{m, n}$ factored}\label{table:g}
\end{center}
\end{table}

The following theorem gives a lower bound for the 2-adic value of a chocolate number. The \textit{2-adic value} of $n$ is defined as the highest power of 2 that divides $n$ and is denoted by $\nu_2(n)$. 

\begin{theorem}
\label{Th5}
If $m, n>1$, then $\nu_2(A_{m, n})\ge m+n-2$.
\end{theorem}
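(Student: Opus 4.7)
The plan is to induct on $m+n$ using the recursion of Theorem~\ref{Th2}. The base case $m=n=2$ gives $A_{2,2}=4$, so $\nu_2(A_{2,2})=2=m+n-2$. For the inductive step, fix $(m,n)$ with $m+n\ge 5$ and assume the bound for every $(i,j)$ with $i,j\ge 2$ and $i+j<m+n$. I would split each of the two sums in the recursion into \emph{interior} terms, where both factors $A_{i,n}$, $A_{m-i,n}$ (or $A_{m,i}$, $A_{m,n-i}$) have both dimensions at least $2$, and \emph{boundary} terms, which involve a factor $A_{1,n}=(n-1)!$ or $A_{m,1}=(m-1)!$ coming from Corollary~\ref{C3}.

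For the interior range $2\le i\le m-2$ of the first sum, the induction hypothesis gives
\[
\nu_2\bigl(A_{i,n}A_{m-i,n}\bigr)\ge (i+n-2)+((m-i)+n-2)=m+2n-4\ge m+n-2
\]
since $n\ge 2$, and the binomial coefficient contributes nonnegatively. The interior of the second sum is handled symmetrically. For the boundary of the first sum, when $m\ge 3$ the pair $i=1$ and $i=m-1$ combine (using the identity $\binom{mn-2}{n-1}=\binom{mn-2}{(m-1)n-1}$ together with $A_{1,n}=(n-1)!$) into
\[
2\binom{mn-2}{n-1}(n-1)!\,A_{m-1,n},
\]
whose $2$-adic valuation is at least $1+(m+n-3)=m+n-2$ by induction. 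The second-sum boundary pairs analogously whenever $n\ge 3$.

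The delicate situation is when $m=2$ (the case $n=2$ follows by Corollary~\ref{C4}), since then the first-sum boundary term $i=1$ has no partner: it equals $\binom{2n-2}{n-1}((n-1)!)^2=(2n-2)!$. Here I would appeal to Legendre's formula, $\nu_2((2n-2)!)=2n-2-s_2(n-1)$, where $s_2$ denotes the binary digit sum. Since $s_2(k)<k$ for every $k\ge 2$, we get $s_2(n-1)\le n-2$ whenever $n\ge 3$, and hence $\nu_2((2n-2)!)\ge n=m+n-2$. The second-sum boundary in this stratum still pairs as before. I expect this unpaired term in the $m=2$ row to be the main obstacle: without the doubling trick available when $m\ge 3$, the extra power of $2$ must be extracted arithmetically from $(2n-2)!$ itself rather than from the inductive hypothesis.
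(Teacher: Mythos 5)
Your proof is correct and takes essentially the same route as the paper: the same interior/boundary decomposition of the recursion from Theorem~\ref{Th2}, the same pairing of the $i=1$ and $i=m-1$ terms via $\binom{mn-2}{n-1}=\binom{mn-2}{(m-1)n-1}$ to extract the extra factor of $2$, and the same reliance on the $2$-adic valuation of $(2n-2)!$ to handle the unpaired term in the $m=2$ row. The only differences are organizational: you run a single induction on $m+n$ with one base case and use Legendre's formula to get $\nu_2((2n-2)!)\ge n$ for all $n\ge 3$, whereas the paper treats the $m=2$ row by a separate induction, checking $n=2,3,4$ by brute force and quoting the estimate $\nu_2((2n-2)!)>n$ for $n>4$.
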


\begin{proof}
We proceed by induction. First consider the base case $m = 2$.  Let $B_n=A_{2, n}$.

By Theorem~\ref{Th2} and Corollary~\ref{C3}, we get 
\[\begin{aligned}
B_n &= A_{2, n}={2n-2 \choose n-1}A_{1, n}A_{n, 1}+\sum_{i=1}^{n-1} {2n-2 \choose 2i-1}A_{2, i}A_{2, n-i}\\
&=(2n-2)!+\sum_{i=1}^{n-1} {2n-2 \choose 2i-1}B_{i}B_{ n-i}.
\end{aligned}\]
By brute force we can check that $\nu_2(B_1)=0$ and $\nu_2(B_j)=j$ for $j=2, 3, 4$.

We induct on $n$.  The base cases $n=2, 3, 4$ have already been verified.  Now assume the result for $i=2, 3, \ldots, n-1$ for some $n>4$.  For all $n>4$, note that $\nu_2((2n-2)!)>n$.  Now note that every term in the summation ${2n-2 \choose 2m-1}B_mB_{n-m}$ has at least the number of powers of $2$ as $B_mB_{n-m}$, which by the induction hypothesis is at least $n$ for $m\neq 1, n-1$.   But for the values $m=1, n-1$, the values of the product $B_mB_{n-m}$  are equal to $B_{n-1}$ and by the inductive hypothesis they each have at least $n-1$ powers of $2$, so added together they have at least $n$ powers of $2$.  Thus we have that $B_n$ is a sum of integers each with at least $n$ powers of $2$, so $\nu_2(B_n)\ge n$ and the induction is complete.  

Similarly, the theorem holds for $n=2$.  

Now assume the statement holds for all $A_{m', n'}$ where $1<m'<m$ and $1<n'<n$ for some $m$, $n>2$.  We prove the statement for $A_{m, n}$.

Recall that\[A_{m,n}=\sum_{i=1}^{m-1} {mn-2 \choose in-1}A_{i, n}A_{m-i, n}+\sum_{i=1}^{n-1} {mn-2 \choose im-1}A_{m, i}A_{m, n-i}.\]

Looking at the first summand, note that for all $i\neq 1$, $m-1$, we have that 
\[
\begin{aligned}
\nu_2\left({mn-2 \choose in-1}A_{i, n}A_{m-i, n}\right) & \ge \nu_2(A_{i, n}A_{m-i, n})\\
& \ge (i+n-2)+(m-i+n-2)\ge m+n-2
\end{aligned}
\]
as $n>2$.  For the other two terms $i=1$, $m-1$, we have 
\[\begin{aligned}
\nu_2&\left(\binom{mn-2}{n-1}A_{1, n}A_{m-1, n}+\binom{mn-2}{mn-n-1}A_{m-1, n}A_{1, n}\right) \\
&\hspace{2cm}=\nu_2\left(2{mn-2 \choose n-1}A_{1, n}A_{m-1, n}\right)\\
&\hspace{2cm}\ge \nu_2\left(2A_{m-1, n}\right)\\
&\hspace{2cm}\ge 1+(m-1+n-2)=m+n-2,
\end{aligned}\]
by the inductive hypothesis.  Thus $\sum\limits_{i=1}^{m-1} {mn-2 \choose in-1}A_{i, n}A_{m-i, n}$ is a sum of terms with $2$-adic order at least $m+n-2$, so $\nu_2 \left(\sum\limits_{i=1}^{m-1} {mn-2 \choose in-1}A_{i, n}A_{m-i, n}\right)\ge m+n-2$.  Similarly, $\nu_2 \left(\sum\limits_{i=1}^{n-1} {mn-2 \choose im-1}A_{m, i}A_{m, n-i}\right)\ge m+n-2$, so $\nu_2(A_{m, n})\ge m+n-2$, as desired, completing the inductive step.

We have already proven the result for $m=2$ and $n=2$.  By the inductive step, if the statement is true for $n=2$ and $m=i$ for all $2\le i\le k$ for some $k\ge 2$, then the statement is true for $A_{k+1, 3}$ and by induction for all $A_{k+1, n}$ for $n>2$.  Thus, by induction the statement is true for all $m$, $n>2$, as desired. 
\end{proof}

In particular, it follows that all Chocolate Numbers except the first one are divisible by 2. We can also estimate 2-adic values for Chocolate-2 numbers and Chocolate Square numbers.

\begin{corollary}
\label{C6}
For the sequence $B_n$ of Chocolate-2 numbers the 2-adic value is bounded below as in: $\nu_2(B_n) \ge n$, where $n>1$.
\end{corollary}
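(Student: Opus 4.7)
\medskip

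The plan is to observe that Corollary~\ref{C6} is essentially an immediate specialization of Theorem~\ref{Th5} to the case $m=2$. Since $B_n = A_{2,n}$ by definition, and Theorem~\ref{Th5} guarantees $\nu_2(A_{m,n}) \ge m+n-2$ whenever $m,n>1$, plugging in $m=2$ yields $\nu_2(B_n) = \nu_2(A_{2,n}) \ge 2+n-2 = n$ for every $n>1$, which is exactly the claim.

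So the first and only step I would carry out is to invoke Theorem~\ref{Th5} with $m=2$; no additional induction or case analysis is required. In fact, rereading the proof of Theorem~\ref{Th5}, the base case handled there (with its own induction on $n$ using the recurrence $B_n=(2n-2)! + \sum_{i=1}^{n-1}\binom{2n-2}{2i-1}B_iB_{n-i}$ and the verifications $\nu_2(B_j)=j$ for $j=2,3,4$) is precisely the statement of this corollary, so nothing new needs to be proved.

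Because the corollary is a direct corollary in the literal sense, there is no real obstacle: the entire content lies in Theorem~\ref{Th5}. The only cosmetic choice is whether to restate the short argument (specialize $m=2$ in the bound $m+n-2$) or simply cite the theorem; I would opt for the one-sentence specialization so that a reader who skips the proof of Theorem~\ref{Th5} can still see the implication explicitly. One might additionally remark that the bound is sharp in light of Table~\ref{table:g}, where $\nu_2(A_{2,n})=n$ is observed exactly for the tabulated values $n=2,3,4$, suggesting equality holds in general (a stronger statement that would not, however, be needed here).
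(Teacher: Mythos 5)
Your proof is correct and matches the paper's (implicit) argument: Corollary~\ref{C6} is exactly Theorem~\ref{Th5} specialized to $m=2$, giving $\nu_2(B_n)=\nu_2(A_{2,n})\ge 2+n-2=n$ for $n>1$ (and indeed the base case of the theorem's proof already establishes this directly). One caveat on your closing remark: equality does not hold in general, since the factorizations listed in Section~\ref{sec:div2} show, e.g., $\nu_2(B_5)=7>5$, so the bound is only sharp for a few small values of $n$.
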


\begin{corollary}
\label{C7}
For the sequence $a(n)$ of Chocolate Square numbers the 2-adic value is bounded below as in: $\nu_2(a(n)) \ge 2n-2$.
\end{corollary}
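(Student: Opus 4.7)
The plan is to observe that Corollary~\ref{C7} is an immediate specialization of Theorem~\ref{Th5} to the diagonal case. Since $a(n) = A_{n,n}$ by definition, I would simply set $m = n$ in the inequality $\nu_2(A_{m,n}) \ge m + n - 2$, which yields $\nu_2(a(n)) = \nu_2(A_{n,n}) \ge n + n - 2 = 2n - 2$. So there is essentially no new computation to perform; the substance has already been carried out in the proof of Theorem~\ref{Th5}.

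The only subtlety is the hypothesis. Theorem~\ref{Th5} requires $m, n > 1$, so the application as stated covers $n \ge 2$. For the degenerate case $n = 1$, I would note separately that $a(1) = A_{1,1} = 1$, so $\nu_2(a(1)) = 0$, while the claimed lower bound is $2(1) - 2 = 0$, and the inequality holds trivially. With this brief remark, the statement is established for all $n \ge 1$.

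Since the argument is a one-line invocation of the preceding theorem, there is no real obstacle and no inductive machinery to redo. The main thing to be careful about is simply to cite Theorem~\ref{Th5} correctly and to remember to dispose of the $n = 1$ boundary case so that the corollary is stated uniformly. No additional lemmas or recursions are needed beyond what has already been developed in Section~\ref{sec:rcn} and the proof of Theorem~\ref{Th5}.
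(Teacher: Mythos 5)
Your proposal is correct and matches the paper's (implicit) argument exactly: the corollary is just Theorem~\ref{Th5} with $m = n$, giving $\nu_2(A_{n,n}) \ge 2n-2$. Your extra check of the $n=1$ case, where $a(1)=1$ and the bound $2n-2=0$ holds trivially, is a harmless and reasonable addition.
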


\section{Divisibility properties of Chocolate-2 numbers}
\label{sec:div2}

Now let us consider a particular sequence $\{B_n\}$, where $B_n = A_{2, n}$, or the number of ways to break a $2\times n$ chocolate bar.   First, recall the following recursion:

\begin{theorem}
\label{Th8}
\[B_n=(2n-2)!+\sum_{m=1}^{n-1} \binom{2n-2}{2m-1} B_mB_{n-m}.\]
\end{theorem}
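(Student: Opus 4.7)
The plan is to specialize Theorem~\ref{Th2} to $m=2$ and simplify the lone term of the first sum using Corollary~\ref{C3}. First I would write down the recursion with the outer parameter set to $2$:
\[
A_{2,n}=\sum_{i=1}^{1}\binom{2n-2}{in-1}A_{i,n}A_{2-i,n}+\sum_{i=1}^{n-1}\binom{2n-2}{2i-1}A_{2,i}A_{2,n-i}.
\]
The second sum is already in the desired shape: substituting $B_k=A_{2,k}$ and relabeling the summation variable $i\mapsto m$ gives exactly $\sum_{m=1}^{n-1}\binom{2n-2}{2m-1}B_mB_{n-m}$.

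Next I would evaluate the first sum, which has collapsed to the single term $i=1$. By Corollary~\ref{C3}, $A_{1,n}=A_{n,1}=(n-1)!$, so
\[
\binom{2n-2}{n-1}A_{1,n}A_{1,n}=\frac{(2n-2)!}{(n-1)!\,(n-1)!}\cdot (n-1)!^{2}=(2n-2)!,
\]
matching the leading term in the claimed identity. Putting the two pieces together yields the theorem.

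I do not expect any real obstacle: the statement is a direct consequence of Theorem~\ref{Th2}, and the only computation is the factorial simplification above. As a sanity check, one can reread the formula combinatorially: the $(2n-2)!$ counts the orderings of breaks after the first move has already sliced the bar horizontally into two $1\times n$ strips (each of which is then broken in any interleaving of the remaining $2(n-1)$ single-strip breaks), while the second sum enumerates first moves that are vertical, splitting the $2\times n$ bar into $2\times i$ and $2\times(n-i)$ sub-bars whose $2n-2$ subsequent breaks can be interleaved in $\binom{2n-2}{2i-1}$ ways.
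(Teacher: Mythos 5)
Your proof is correct and follows essentially the same route as the paper: the paper derives this identity inside the proof of Theorem~\ref{Th5} by setting $m=2$ in Theorem~\ref{Th2}, collapsing the first sum to $\binom{2n-2}{n-1}A_{1,n}A_{n,1}=(2n-2)!$ via Corollary~\ref{C3}, and recognizing the second sum as $\sum_{m=1}^{n-1}\binom{2n-2}{2m-1}B_mB_{n-m}$, exactly as you do.
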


\begin{proof}
This follows from Theorem~\ref{Th2} and was proven in the proof of Theorem~\ref{Th5}.
\end{proof}

Let us consider the prime factorizations of the Chocolate-2 numbers: $1$, $2^2$, $2^3\cdot 7$, $2^4\cdot 107$, $2^7\cdot 5^2\cdot 29$, $2^{10}\cdot 11\cdot 19\cdot 37$, $2^{10}\cdot 11\cdot 59\cdot 1481$, $2^{12}\cdot 5\cdot 11\cdot 31\cdot 24151$, $2^{15}\cdot 11\cdot 571\cdot 185789$, $2^{17}\cdot 5\cdot 11\cdot 1607\cdot 958673$, $2^{18}\cdot 5\cdot 11\cdot 97\cdot 9371\cdot 307259, \ldots$ 

We already saw that $B_n$ is divisible by $2^n$. Looking at the sequence we see that these numbers have many other divisors. For example, we see that many of the elements are divisible by 5 and/or 11.
We can show that if sufficiently many consebreakive terms are divisible by some number, all the following terms will also be divisible by that number.

\begin{theorem}
\label{Th9}
Let $n$ be a positive integer.  If $k$ is a positive integer dividing $B_i$ for all  $\Bigl\lfloor\dfrac{n+1}{2}\Bigr\rfloor\le i\le n-1$ and satisfying $k|(2n-2)!$, then $k|B_j$ for all $j\ge n$.
\end{theorem}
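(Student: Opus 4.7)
The plan is to use strong induction on $j \ge n$, leveraging the recursion from Theorem~\ref{Th8}. Set $L = \lfloor (n+1)/2 \rfloor$. I first observe the key numerical fact: $L = \lceil n/2 \rceil$, so for any $j \ge n$ and any $m \in \{1, \dots, j-1\}$, the larger of $m$ and $j-m$ is at least $\lceil j/2 \rceil \ge \lceil n/2 \rceil = L$ and at most $j-1$. This is the arithmetic backbone of the argument.

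The base case $j = n$ is the heart of the matter. Applying Theorem~\ref{Th8},
\[
B_n = (2n-2)! + \sum_{m=1}^{n-1} \binom{2n-2}{2m-1} B_m B_{n-m}.
\]
The first term is divisible by $k$ by hypothesis. For each $m$ in the sum, the index $\max(m, n-m)$ lies in $[L, n-1]$, so by hypothesis $k$ divides $B_{\max(m, n-m)}$, and hence divides the full product $B_m B_{n-m}$. Therefore $k \mid B_n$.

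For the inductive step, suppose $j > n$ and $k \mid B_i$ for all $n \le i \le j-1$. Combined with the hypothesis, we know $k \mid B_i$ for every $i$ in the range $[L, j-1]$. Also $(2n-2)!$ divides $(2j-2)!$, so $k \mid (2j-2)!$. Now in the recursion
\[
B_j = (2j-2)! + \sum_{m=1}^{j-1} \binom{2j-2}{2m-1} B_m B_{j-m},
\]
for each $m$ the index $\max(m, j-m)$ lies in $[\lceil j/2 \rceil, j-1] \subseteq [L, j-1]$, so $k$ divides one of the two factors $B_m, B_{j-m}$, and therefore divides the whole summand. Thus $k \mid B_j$, closing the induction.

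I do not anticipate a serious obstacle. The only step requiring care is the identity $\lceil n/2 \rceil = \lfloor (n+1)/2 \rfloor$ and verifying that it, together with $j \ge n$, guarantees that the larger of $m$ and $j-m$ always falls inside the inductive range $[L, j-1]$; this is precisely why the theorem's hypothesis is stated with $L$ rather than some smaller threshold. Once this bookkeeping is in place, the argument reduces to routine application of the recursion.
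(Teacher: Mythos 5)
Your proposal is correct and follows essentially the same route as the paper: induct on $j\ge n$, apply the recursion of Theorem~\ref{Th8}, and observe that $k$ divides $(2j-2)!$ and at least one factor of each product $B_mB_{j-m}$ because $\max(m,j-m)$ lies in $\bigl[\lfloor(n+1)/2\rfloor,\,j-1\bigr]$. In fact your writeup makes explicit the arithmetic point ($\max(m,j-m)\ge\lceil j/2\rceil\ge\lfloor(n+1)/2\rfloor$) that the paper's terse proof leaves implicit.
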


\begin{proof}
We use induction on $n$.  Since $k$ divides each of the factors in the summation as well as $(2n-2)!$, it must divide $B_n$.  Now assuming the result for $n, n+1, \ldots, m$, we prove the result for $m+1$.  Since $m\ge n$, we have $k|(2m-2)!$, and since $k|B_j$ for all $\Bigl\lfloor\dfrac{n+1}{2}\Bigr\rfloor\le k\le n-1$, so by the same argument as above, $k$ must divide $B_{m+1}$, so we are done.
\end{proof}

This allows us to see when the remainder converges to $0$ when $B_n$ is divided by certain numbers.

We already noticed that many of the terms are divisible by $5$ and $11$.  Then, using the previous theorem, we get the following corollaries.

\begin{corollary}
\label{C10}
For all $i\ge 6$, $11|B_i$.
\end{corollary}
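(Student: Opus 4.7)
The plan is to apply Theorem~\ref{Th9} with an appropriately chosen $n$, after directly verifying a finite block of base cases. To use Theorem~\ref{Th9} for the prime $k=11$, I need to find some $n$ with two properties: first, $11 \mid (2n-2)!$, which forces $2n-2 \geq 11$, i.e.\ $n \geq 7$; and second, $11 \mid B_i$ for every $i$ in the range $\lfloor (n+1)/2 \rfloor \leq i \leq n-1$. If I can realize both, then Theorem~\ref{Th9} will hand me $11 \mid B_j$ for all $j \geq n$ in one shot.

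The smallest $n$ that could possibly work is $n=11$: here the required index range is $6 \leq i \leq 10$, and I need $11 \mid B_i$ throughout this block. I would establish these five base cases directly, by reading off the prime factorizations listed immediately before Theorem~\ref{Th9} (or, equivalently, by computing $B_6, B_7, B_8, B_9, B_{10}$ modulo $11$ via the recursion of Theorem~\ref{Th8}). Each of $B_6 = 2^{10}\cdot 11\cdot 19\cdot 37$, $B_7 = 2^{10}\cdot 11\cdot 59\cdot 1481$, $B_8 = 2^{12}\cdot 5\cdot 11\cdot 31\cdot 24151$, $B_9 = 2^{15}\cdot 11\cdot 571\cdot 185789$, and $B_{10} = 2^{17}\cdot 5\cdot 11\cdot 1607\cdot 958673$ is visibly divisible by $11$.

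With the block $6 \leq i \leq 10$ in hand, Theorem~\ref{Th9} applies with $n=11$, because $\lfloor 12/2 \rfloor = 6$, and $11 \mid 20! = (2\cdot 11-2)!$. The theorem then yields $11 \mid B_j$ for every $j \geq 11$. Combining this with the explicit divisibilities for $i = 6,7,8,9,10$ gives $11 \mid B_i$ for all $i \geq 6$.

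The only potential obstacle is purely bookkeeping: ensuring that the inductive base block required by Theorem~\ref{Th9} actually starts at or above index $6$, so the conclusion matches the statement of the corollary. Since the range $\lfloor (n+1)/2 \rfloor \leq i \leq n-1$ for $n=11$ is exactly $\{6,7,8,9,10\}$, no earlier value of $B_i$ (such as $B_4 = 2^4\cdot 107$, which is not divisible by $11$) is involved, and the choice $n=11$ is the minimal one that simultaneously satisfies the factorial condition and keeps all required base cases inside the already-verified divisible range.
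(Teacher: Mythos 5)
Your proposal is correct and follows the same strategy as the paper: verify a block of base cases and feed them into Theorem~\ref{Th9}. In fact your instantiation is the careful one. The paper's own proof claims to check that $11$ divides $B_3$, $B_4$, $B_5$, but from the listed factorizations $B_3=2^3\cdot 7$, $B_4=2^4\cdot 107$, $B_5=2^7\cdot 5^2\cdot 29$, none of these is divisible by $11$, and the corresponding application of Theorem~\ref{Th9} (which would be $n=6$, requiring $11\mid 10!$) also fails its factorial hypothesis; the paper's argument as written therefore does not go through. Your choice $n=11$, with the base block $B_6,\dots,B_{10}$ (all visibly divisible by $11$) and the condition $11\mid 20!$, is exactly the minimal valid application of Theorem~\ref{Th9}, and it yields $11\mid B_j$ for $j\ge 11$, which together with the verified block gives the corollary for all $i\ge 6$. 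So your proof is not only correct but repairs the paper's stated base cases.
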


\begin{proof}
Through checking that $11$ divides $B_3$, $B_4$, and $B_5$, the result is immediate by Theorem~\ref{Th9}.
\end{proof}

\begin{corollary}
\label{C11}
For all $i\ge 13$, $5|B_i$.
\end{corollary}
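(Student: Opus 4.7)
The strategy is to apply Theorem~\ref{Th9} with $k = 5$ in exact parallel to Corollary~\ref{C10}, but at a larger value of $n$. Since $\nu_5((2n-2)!) \ge 1$ whenever $n \ge 4$, the factorial hypothesis of Theorem~\ref{Th9} is automatic throughout the range we will use, and the task reduces to pinning down an $n$ for which $5$ divides $B_i$ on the full window $\lfloor (n+1)/2 \rfloor \le i \le n-1$.

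The obstacle is that divisibility by $5$ is sporadic in the first dozen terms. The factorizations listed just before Theorem~\ref{Th9} show directly that $5 \nmid B_6,\, B_7,\, B_9$, and reducing the recursion of Theorem~\ref{Th8} modulo $5$ (computing each $\binom{2n-2}{2m-1} \bmod 5$ via Lucas' theorem in base $5$) also gives $5 \nmid B_{12}$. In particular, Theorem~\ref{Th9} cannot be applied at $n = 13$ as it stands, and we need a longer base case than in Corollary~\ref{C10}.

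My plan is to push the modular recursion further, working with
\[
B_n \;\equiv\; \sum_{m=1}^{n-1} \binom{2n-2}{2m-1}\, B_m\, B_{n-m} \pmod 5 \qquad (n \ge 4),
\]
and to verify by a finite computation that $5 \mid B_i$ for every $i$ in the consecutive block $\{13, 14, \dots, 24\}$. Once this is in hand, Theorem~\ref{Th9} applied at $n = 25$ has hypothesis window $\lfloor 26/2 \rfloor \le i \le 24$, i.e., exactly the block just verified (and $5 \mid 48!$ is obvious), so it yields $5 \mid B_j$ for every $j \ge 25$. Combined with the twelve direct checks on $\{13, \dots, 24\}$, this gives $5 \mid B_i$ for all $i \ge 13$, as desired.

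The main difficulty is simply the bulk of the base case: the sporadic small-index residues force the directly-verified block out to twelve consecutive indices rather than the three used in the Corollary~\ref{C10} analogue. Lucas' theorem keeps each individual binomial evaluation short, and many terms in the sum vanish modulo~$5$ (for instance around $n = 14$, where $2n-2 = 26 = 101_5$ kills every $\binom{26}{k}$ whose middle base-$5$ digit is nonzero), so the computation is routine but the bookkeeping of the twelve residues is where the real work lies.
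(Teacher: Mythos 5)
Your proposal is correct and matches the paper's proof: the paper likewise verifies by computation that $5\mid B_i$ for all $13\le i\le 24$ and then applies Theorem~\ref{Th9} (at $n=25$, where the window is exactly $13\le i\le 24$ and $5\mid(2n-2)!$ is immediate). The extra discussion of how you would carry out the modular computation is fine but not a difference in approach.
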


\begin{proof}
By computation, $5|B_i$ for all $13\le i\le 24$.  Thus, by Theorem~\ref{Th9}, the result follows.
\end{proof}

\subsection{Generating functions}

We implement the technique of generating functions \cite{Wilf} to find an explicit form for the generating function of $B_n$ using hypergeometric functions. 

Consider a function 
$$f(X) =  \sum_{n \ge 1} \frac{B_n}{(2n-1)!}X^n.$$

\begin{lemma}
\label{L12}
Function $f(X)$ satisfies the following differential equation:
$$f(X)^\prime = \frac{1}{2(1-X)} + \frac{1}{2X}f(X)+ \frac{1}{2X}f(X)^2.$$
\end{lemma}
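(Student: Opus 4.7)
The plan is to convert the recursion of Theorem~\ref{Th8} into a clean relation on the rescaled sequence $b_n := B_n/(2n-1)!$ (the coefficients of $f$), then extract coefficients of $X^{n-1}$ from both sides of the claimed ODE and check the two recursions match term by term.

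First I would simplify the recursion. Dividing $B_n=(2n-2)!+\sum_{m=1}^{n-1}\binom{2n-2}{2m-1}B_mB_{n-m}$ by $(2n-1)!$ and using the identity $(2n-1)\binom{2n-2}{2m-1}=\frac{(2n-1)!}{(2m-1)!(2n-2m-1)!}$ yields the equivalent statement
\[
(2n-1)\,b_n = 1 + \sum_{m=1}^{n-1} b_m b_{n-m}, \qquad n\ge 1,
\]
where the sum is empty at $n=1$ and correctly gives $b_1 = B_1/1! = 1$. This is the ``extracted'' form of the ODE we are aiming at.

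Next I would expand the right-hand side of the claimed differential equation as a formal power series in $X$. Since $f(X)$ vanishes at $X=0$, both $\frac{1}{2X}f(X)$ and $\frac{1}{2X}f(X)^2$ are power series (the latter because $f(X)^2$ begins with $X^2$). Explicitly,
\[
\frac{1}{2(1-X)} = \tfrac{1}{2}\sum_{n\ge 1} X^{n-1}, \qquad \frac{1}{2X} f(X) = \tfrac{1}{2}\sum_{n\ge 1} b_n X^{n-1},
\]
\[
\frac{1}{2X} f(X)^2 = \tfrac{1}{2}\sum_{n\ge 2}\Bigl(\sum_{m=1}^{n-1} b_m b_{n-m}\Bigr) X^{n-1},
\]
while $f'(X) = \sum_{n\ge 1} n b_n X^{n-1}$. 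Equating the coefficient of $X^{n-1}$ on both sides gives, for every $n\ge 1$,
\[
n b_n = \tfrac{1}{2} + \tfrac{1}{2} b_n + \tfrac{1}{2}\sum_{m=1}^{n-1} b_m b_{n-m}.
\]

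Finally, multiplying through by $2$ and rearranging produces exactly $(2n-1)b_n = 1 + \sum_{m=1}^{n-1} b_m b_{n-m}$, which is the recursion established in the first step; conversely, since coefficient equality uniquely determines the formal series identity, this proves the ODE. The only mildly delicate point is verifying that the naked $\frac{1}{2X}$ does not destroy analyticity at $0$, which is handled by the observation that $f(X)=O(X)$ and $f(X)^2 = O(X^2)$; there is no serious obstacle beyond careful bookkeeping of the binomial identity in the first step.
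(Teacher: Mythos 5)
Your proposal is correct and follows essentially the same route as the paper: both rewrite the recursion of Theorem~\ref{Th8} in the normalized form $(2n-1)\,\frac{B_n}{(2n-1)!}=1+\sum_{m=1}^{n-1}\frac{B_m}{(2m-1)!}\cdot\frac{B_{n-m}}{(2n-2m-1)!}$ and then match it against the power-series expansion of the claimed equation (the paper manipulates $f^2$, $f$, and $Xf'$ as series; you extract the coefficient of $X^{n-1}$, which is the same computation). No gaps; the bookkeeping with the factor $\frac{1}{2X}$ and the vanishing of $f$ at $0$ is handled correctly.
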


\begin{proof}
Indeed,
\[\begin{aligned}
f(X)^2 &=  \left(\sum_{i \ge 1}\frac{B_i}{(2i-1)!}X^i\right) \left(\sum_{j \ge 1}\frac{B_j}{(2j-1)!}X^j\right)
\\
&= \sum_{n\ge 2} \sum_{m=1}^{n-1}\frac{B_mB_{n-m}X^n}{(2m-1)!(2n-2m-1)!}.
\end{aligned}
\]

Notice that Theorem~\ref{Th8} is equivalent to 
\[\frac{B_n}{(2n-2)!} =1+\sum_{m=1}^{n-1} \frac{ B_mB_{n-m}}{(2m-1)!(2n-2m-1)!}.\]
After substituting we get
\[
\begin{aligned}
f(X)^2 
& = \sum_{n\ge 2} \frac{B_nX^n}{(2n-2)!} - \sum_{n\ge 2} X^n\\
& = \sum_{n\ge 1} \frac{B_nX^n}{(2n-2)!}  -  \sum_{n\ge 1} X^n\\
& =\sum_{n\ge 1} \frac{(2n-1)B_nX^{n}}{(2n-1)!}  - \frac{X}{1-X}\\
& = 2\sum_{n\ge 1} \frac{nB_nX^{n}}{(2n-1)!} - f(X)  - \frac{X}{1-X}.
\end{aligned}
\]
On the other hand,
\[
f(X)^\prime= \sum_{n\ge 1} \frac{nB_nX^{n-1}}{(2n-1)!}= \frac{1}{X}\sum_{n\ge 1} \frac{nB_nX^{n}}{(2n-1)!}.
\]
Combining these equations together we get 
\[
f(X)^2= 2Xf'(X) - f(X) - \frac{X}{1-X}.
\]
In other words:
\[
f(X)^\prime = \frac{1}{2(1-X)} + \frac{1}{2X}f(X)+ \frac{1}{2X}f(X)^2.
\]
\end{proof}

This is a \textit{Riccati equation}. Standard methods allow to reduce it to a second order linear ordinary differential equation \cite{Ince}. Namely, we introduce a new function $v(X) = \frac{1}{2X}f(X)$. Then

\[
v^\prime = -\frac{f}{2X^2} + \frac{f^\prime}{2X} =- \frac{f}{2X^2} + \frac{1}{4X(1-X)} + \frac{f}{4X^2}+ \frac{f^2}{4X^2}  =  \frac{1}{4X(1-X)} - \frac{v}{2X}+ v^2.
\]

The standard method of solving an equation of type 
\[v'=v^2 + R(x)v +S(x)
\]
is by substituting $v=-u'/u$ \cite{Ince}. Indeed,
\[v'=-(u'/u)'=-(u''/u) +(u'/u)^2=-(u''/u)+v^2.\]
Therefore,
\[u''/u= v^2 -v'=-S -Rv=-S +Ru'/u\]
and 
\[
 u'' -Ru' +Su=0.\]
 
In our case, 
\begin{equation}
\label{eq:1}
u'' +\frac{u'}{2X} +\frac{u}{4X(1-X)}=0.
\end{equation}

\subsection{Hypergeometric functions}
\label{sec:hyper}

The hypergeometric function is defined for $|X| < 1$ by the power series \cite{Bailey}
\[{}_2F_1(a,b;c;X) = \sum_{n=0}^\infty \frac{(a)_n (b)_n}{(c)_n} \frac{X^n}{n!}.\]

Here $(q)_n$ is called \textit{the rising Pochhammer symbol}, and is defined by:
\[(q)_n = \begin{cases} 1 & n = 0 \\ q(q+1) \cdots (q+n-1) & n > 0 \end{cases}.\]
    
Note that the hypergeometric function is undefined (or infinite) if $c$ equals a non-positive integer.  

The useful fact about the hypergeometric function is that it is a solution of Euler's hypergeometric differential equation \cite{Bailey}
\[X(1-X) w'' + (c-(a+b+1)X) w' - ab\,w = 0. \]

Now we are ready to express function $u$ from Eq.~(\ref{eq:1}) in the following lemma.

\begin{lemma}
\label{L13}
\[u = {}_2F_1\left(\frac{-1-\sqrt{5}}{4}, \frac{-1+\sqrt{5}}{4}; \frac{1}{2}; X\right).
\]
\end{lemma}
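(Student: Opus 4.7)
The plan is to recognize equation (\ref{eq:1}) as a disguised form of Euler's hypergeometric differential equation and then match parameters. First, I would multiply (\ref{eq:1}) through by $4X(1-X)$ to clear denominators, obtaining
\[
4X(1-X)u'' + 2(1-X)u' + u = 0,
\]
and then divide by $4$ so that the leading term matches the standard form:
\[
X(1-X)u'' + \left(\tfrac{1}{2} - \tfrac{1}{2}X\right)u' + \tfrac{1}{4}u = 0.
\]
Comparing with $X(1-X)w'' + (c - (a+b+1)X)w' - ab\,w = 0$, I would read off the system $c = \tfrac{1}{2}$, $a+b+1 = \tfrac{1}{2}$, and $-ab = \tfrac{1}{4}$, i.e., $a + b = -\tfrac{1}{2}$ and $ab = -\tfrac{1}{4}$. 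Then $a$ and $b$ are the two roots of $4t^2 + 2t - 1 = 0$, which by the quadratic formula are exactly $\tfrac{-1\pm\sqrt{5}}{4}$, matching the parameters in the stated ${}_2F_1$.

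Next I would address uniqueness. Equation (\ref{eq:1}) is second order, so it has a two-dimensional solution space. Near $X=0$ the indicial exponents are $0$ and $1-c = \tfrac{1}{2}$, so one fundamental solution is analytic and the other behaves like $X^{1/2}$. However, the function $u$ we care about is determined only up to a multiplicative scalar by $u'/u = -v = -f(X)/(2X)$, and the series $f(X)/(2X) = \tfrac{B_1}{2} + \tfrac{B_2}{2\cdot 3!}X + \cdots$ is analytic at $0$ with value $\tfrac{1}{2}$. Integrating and exponentiating forces $u$ to be analytic at $0$, so $u$ must be a scalar multiple of the analytic branch, which is precisely ${}_2F_1\bigl(\tfrac{-1-\sqrt{5}}{4},\tfrac{-1+\sqrt{5}}{4};\tfrac{1}{2};X\bigr)$.

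Finally, I would fix the scalar via the initial condition. Normalizing $u(0) = 1$ matches ${}_2F_1(a,b;c;0) = 1$. To check consistency with $v$, one computes $-u'(0)/u(0) = -ab/c = -(-1/4)/(1/2) = 1/2$, which agrees with $v(0) = B_1/2 = 1/2$. Thus the stated hypergeometric expression is the unique analytic solution of (\ref{eq:1}) normalized so that $u(0)=1$. The main obstacle here is conceptual rather than computational: one must be careful that the identification of $u$ with a specific ${}_2F_1$ (and not merely a scalar multiple of it) depends on the normalization choice inherited from $f$, and one must verify that the analytic branch, not the $X^{1/2}$ branch, is the correct one — which is forced by the fact that $v = f/(2X)$ is analytic at the origin.
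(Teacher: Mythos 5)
Your proposal is correct and follows essentially the same route as the paper: rewrite Eq.~(\ref{eq:1}) as $X(1-X)u'' + (\tfrac{1}{2}-\tfrac{1}{2}X)u' + \tfrac{1}{4}u = 0$ and match parameters with Euler's hypergeometric equation to get $c=\tfrac12$, $a+b=-\tfrac12$, $ab=-\tfrac14$, hence $a,b = \tfrac{-1\pm\sqrt5}{4}$. Your additional discussion of why $u$ must be the analytic branch (since $v=f/(2X)$ is analytic at $0$) and of the normalization $u(0)=1$ is a sound refinement of a point the paper's two-line proof leaves implicit, not a different method.
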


\begin{proof}
The equation for $u$ can be rewritten as
\[
X(1-X)u'' +(1/2 -X/2)u' +u/4=0.
\]
This means $c=1/2$, $a+b =-1/2$ and $ab = -1/4$.
\end{proof}

From here $f=-2Xu'/u$. Therefore, the following theorem immediately follows.

\begin{theorem}
\label{Th14}
\[f(X) = -2X \frac{\partial}{\partial X} \log\left(_2F_1\left(\frac{-1-\sqrt{5}}{4}, \frac{-1+\sqrt{5}}{4}; \frac{1}{2}; X\right) \right).\]
\end{theorem}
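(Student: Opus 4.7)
The plan is to show that Theorem~\ref{Th14} is essentially the composition of all the substitutions made in the derivation leading up to Lemma~\ref{L13}. First I would simply reassemble the chain. Starting from Lemma~\ref{L12}, we introduced $v(X) = f(X)/(2X)$, so $f(X) = 2X\,v(X)$. Then, to linearise the Riccati equation for $v$, we set $v = -u'/u$, which, since $u'/u = \partial_X \log u$, gives
\[
f(X) \;=\; 2X\,v(X) \;=\; -2X\,\frac{u'(X)}{u(X)} \;=\; -2X\,\frac{\partial}{\partial X}\log u(X).
\]
Substituting in the explicit form of $u$ provided by Lemma~\ref{L13} yields exactly the claimed formula.

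Before declaring victory, I would verify two technical points. First, the logarithmic derivative must be well-defined as a formal power series around $X=0$; this is immediate because $_2F_1(a,b;c;0) = 1 \ne 0$, so $\log u$ makes sense as a power series. Second, and more importantly, the linear ODE~(\ref{eq:1}) is second order and therefore has a two-parameter family of solutions $u$, while $f$ is uniquely determined as a power series by its combinatorial definition. I would therefore check that the specific $u$ chosen in Lemma~\ref{L13} produces the correct $f$, not merely some solution of the Riccati equation. Since multiplying $u$ by a nonzero constant does not change $\partial_X\log u$, the only parameter that actually matters is the ratio $u'(0)/u(0)$.

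To pin this down, I would compute both sides at $X=0$. From the series $f(X) = \sum_{n\ge 1} \tfrac{B_n}{(2n-1)!}X^n$ with $B_1 = 1$, we read off $v(0) = \lim_{X\to 0} f(X)/(2X) = 1/2$, so we need $u'(0)/u(0) = -1/2$. For the Lemma~\ref{L13} choice, the standard expansion gives $u(0) = 1$ and $u'(0) = ab/c$, where $a = (-1-\sqrt{5})/4$, $b = (-1+\sqrt{5})/4$, $c = 1/2$. Since $ab = (1-5)/16 = -1/4$, we get $ab/c = -1/2$, matching exactly. Thus the series $-2X\,\partial_X \log u$ satisfies the same Riccati equation as $f$ with the same initial value, and the uniqueness of power-series solutions of such a first-order ODE (with non-vanishing leading coefficient after clearing $2X$) forces equality.

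The only real obstacle is this uniqueness/initial-condition bookkeeping; the algebraic identification of $u$ with the hypergeometric function has already been done in Lemma~\ref{L13} by matching the coefficients $c = 1/2$, $a+b = -1/2$, $ab = -1/4$ against Euler's hypergeometric equation. Once the initial value is checked, the theorem follows by pure substitution, which is why the authors can state that it ``immediately follows.''
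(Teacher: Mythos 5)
Your proposal is correct and follows the same route as the paper: the theorem is obtained by composing the substitutions $f = 2Xv$ and $v = -u'/u$ from the Riccati reduction and then inserting the hypergeometric $u$ of Lemma~\ref{L13}, which is exactly how the paper concludes that the formula ``immediately follows.'' Your additional verification that the correct solution of the second-order equation is selected --- checking $u(0)=1$, $u'(0)=ab/c=-1/2$ against $v(0)=1/2$ and invoking uniqueness of the power-series solution of the Riccati equation --- is sound bookkeeping that the paper leaves implicit, but it does not change the method.
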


Now we will use the formula for the hypergeometric function to look at divisibility results modulo different prime numbers. But first we rearrange the series:

\[
\begin{aligned}
_2F_1 & \left(\frac{-1-\sqrt{5}}{4}, \frac{-1+\sqrt{5}}{4}; \frac{1}{2}; X\right) = \sum_{n =0}^\infty \frac{(\frac{-1-\sqrt 5}{4})_n (\frac{-1+\sqrt 5}{4})_n} {(\frac{1}{2})_n}\frac{X^n}{n!} 
\\&\hspace{1cm} = \sum_{n=0}^\infty \frac{\prod_{i=1}^n ((4i-5) - \sqrt{5})\prod_{i=1}^n ((4i-5) + \sqrt{5})}{8^n\prod_{i=1}^n(2i-1)} \frac{X^n}{n!}
\\&\hspace{1cm} = \sum_{n=0}^\infty \frac{\prod_{i=1}^n ((4i-5)^2 - 5)}{4^n(2n)!} X^n.
\end{aligned}
\]

Let us denote the numerators by $P_n$:
\[
P_n = \prod_{i=1}^n ((4i-5)^2 - 5).
\]

\begin{lemma}
\label{L15}
The sequence $P_n$ considered modulo prime $p$ converges to all-zero sequence if and only if $p$ equals 2, 5 or $p \equiv \pm 1 \pmod{5}$.
\end{lemma}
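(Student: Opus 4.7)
The key observation is that $P_{n+1} = P_n \cdot \bigl((4(n+1)-5)^2 - 5\bigr)$, so once $P_n \equiv 0 \pmod{p}$ all subsequent terms vanish; conversely if no factor is ever divisible by $p$, then $P_n$ is a product of units in $\mathbb{Z}/p\mathbb{Z}$ and is never zero. Thus the sequence is eventually zero modulo $p$ if and only if there exists some positive integer $i$ with $(4i-5)^2 \equiv 5 \pmod{p}$. The plan is therefore to determine exactly the primes $p$ for which this congruence is solvable.

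I would first dispose of the two small primes directly. For $p = 2$, the quantity $4i-5$ is odd, so $(4i-5)^2-5$ is even for every $i$, and $P_n \equiv 0 \pmod{2}$ already for $n \ge 1$. For $p = 5$, reduce $4i-5 \equiv 4i \pmod{5}$ to obtain $(4i-5)^2 - 5 \equiv 16i^2 \equiv i^2 \pmod{5}$, whence $P_n \equiv (n!)^2 \pmod{5}$, which vanishes for all $n \ge 5$. Both of these primes thus belong to the list.

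Next I would handle the generic case of an odd prime $p \ne 5$. Since $\gcd(4,p)=1$, the map $i \mapsto 4i - 5$ is a bijection on $\mathbb{Z}/p\mathbb{Z}$ (and is realized by infinitely many positive integers $i$), so the congruence $(4i-5)^2 \equiv 5 \pmod{p}$ admits a solution in positive integers if and only if $5$ is a quadratic residue modulo $p$. Because $5 \equiv 1 \pmod{4}$, quadratic reciprocity gives
\[
\left(\frac{5}{p}\right) = \left(\frac{p}{5}\right),
\]
and the quadratic residues mod $5$ are $\{1,4\}$. Hence $5$ is a QR mod $p$ precisely when $p \equiv \pm 1 \pmod{5}$.

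Combining the three cases yields the claimed equivalence. In the forward direction, if $P_n \to 0$ modulo $p$ then some factor must be divisible by $p$, forcing $p \in \{2,5\}$ or $p \equiv \pm 1 \pmod{5}$; in the backward direction, each of those prime classes has been shown explicitly to produce a zero factor, and all subsequent $P_n$ inherit this divisibility. No step is particularly difficult, but the only substantive content is the quadratic reciprocity computation in the generic case, which I would present as the main argument while keeping the treatments of $p=2$ and $p=5$ as short direct verifications.
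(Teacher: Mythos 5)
Your proposal is correct and follows essentially the same route as the paper: reduce convergence to the all-zero sequence to the solvability of $(4i-5)^2 \equiv 5 \pmod{p}$, treat $p=2$ and $p=5$ by direct inspection, and settle the remaining odd primes via quadratic reciprocity, concluding that $5$ is a quadratic residue mod $p$ exactly when $p \equiv \pm 1 \pmod{5}$. The only differences are cosmetic (you compute $P_n \equiv (n!)^2 \pmod 5$ explicitly where the paper just cites $5 \mid P_5$, and you phrase the linear substitution as a bijection of $\mathbb{Z}/p\mathbb{Z}$ rather than solving for $i$ with $4^{-1}$).
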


\begin{proof}
If an element $P_i=0$ modulo $p$ for some index $i$, then the sequence is all zeros for all the consebreakive indices. That means the sequence $P_n$ considered modulo prime $p$ converges to all-zero sequence if and only if there exists $i$ such that $p| (4i-5)^2-5)$. 

Notice that $P_1 = -4$ and $P_5$ is divisible by 5. That means the sequence converges when $p$ equals 2 or 5.  

Otherwise, let us assume that $p$ is an odd prime not equal to $5$.  Note that if there exists a positive integer $x$ such that $x^2\equiv 5\pmod p$, then given that $p$ is not 2, there exists $i$ such that $i\equiv (x+5)\cdot 4^{-1}\pmod p$.  Then $x= (4i-5)$.  If there is no such $x$, then clearly we cannot have $(4i-5)^2\equiv 5\pmod p$.  Thus there exists such an $i$ if and only if $5$ is a quadratic residue $\pmod p$.  Since $p$ is an odd prime not equal to $5$, by the Law of Quadratic Reciprocity \cite{HW}, we have $\left(\frac{5}{p}\right)\left(\frac{p}{5}\right)=(-1)^{p-1}=1$, so $5$ is a quadratic residue $\pmod p$ if and only if $p$ is a quadratic residue$\pmod 5$, which is equivalent to $p\equiv 1$ or $4\pmod 5$.
\end{proof}

The sequence of primes congruent to 1 or 4 modulo 5 is sequence \seqnum{A045468} in the database: 11, 19, 29, 31, 41, 59, $\ldots$. This sequence can also be defined as  primes $p$ that divide $F_{p-1}$, the Fibonacci number with index $p-1$. The connection to Fibonacci numbers is not very surprising as the first two parameters of our hypergeometric function are the golden ratio and its inverse, both scaled by $-1/2$.

We ran several programs which computed Chocolate Numbers and evaluated them modulo different primes. It seems that the sequences $B_n$ and $P_n$ eventually become periodic for the same primes.

This motivates the following conjecture.

\begin{conjecture}
\label{Conj1}
The sequence $B_n$ considered modulo prime $p$ converges to all-zero sequence if and only if $p$ equals 2, 5 or $p \equiv \pm 1 \pmod{5}$.
\end{conjecture}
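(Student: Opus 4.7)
The plan is to promote the generating-function identity $f(X)u(X) + 2X\,u'(X) = 0$ of Lemma~\ref{L12}/Theorem~\ref{Th14} to a single integer identity tightly coupling the two sequences that appear in the conjecture. Extracting the coefficient of $X^n$ from this relation and clearing denominators (multiply through by $4^n(2n)!$ and divide by $2n$) gives the clean identity
\[
P_n + 4^n B_n + \sum_{k=1}^{n-1} 4^k \binom{2n-1}{2k-1} B_k\, P_{n-k} = 0
\]
in $\mathbb{Z}$, valid for every $n \geq 1$ and hence reducing modulo any prime without obstruction.

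For the ``if'' direction, assume $p \in \{2,5\}$ or $p \equiv \pm 1 \pmod 5$. The case $p = 2$ is already handled by Corollary~\ref{C6}, so take $p$ odd. By Lemma~\ref{L15} there is a smallest $N = N(p)$ with $p \mid P_N$, and since $P_n = P_{n-1}\bigl((4n-5)^2-5\bigr)$ the divisibility persists: $p \mid P_n$ for every $n \geq N$. Substituting into the identity, all terms with $n-k \geq N$ vanish modulo $p$, leaving the finite-length linear recurrence
\[
B_n \equiv -\sum_{j=1}^{N-1} 4^{-j} \binom{2n-1}{2j} P_j\, B_{n-j} \pmod{p} \qquad (n \geq N),
\]
whose coefficients are periodic in $n$ by Lucas' theorem. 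The decisive structural fact is that the all-zero state is absorbing: if $B_{n-1},\dots,B_{n-N+1}$ all vanish modulo $p$, then so does $B_n$, and by induction so does every later term. It therefore suffices to exhibit a single window of $N-1$ consecutive indices on which $p$ divides $B_i$, which I would locate by direct computation of $B_n \bmod p$ up to a modest threshold in the spirit of Corollaries~\ref{C10} and \ref{C11}, combined with Theorem~\ref{Th9} to cut down the computational work.

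For the converse, suppose $p \neq 2, 5$ and $p \not\equiv \pm 1 \pmod 5$, so by Lemma~\ref{L15} no $P_n$ is divisible by $p$. If $B_n \equiv 0 \pmod p$ for all $n \geq M$, the same identity, solved for $P_n$, reads
\[
P_n \equiv -\sum_{k=1}^{M-1} 4^k \binom{2n-1}{2k-1} B_k\, P_{n-k} \pmod{p} \qquad (n \geq M),
\]
expressing $P_n$ modulo $p$ as an $\mathbb{F}_p$-linear combination of $M-1$ of its predecessors with $n$-periodic coefficients. Translating back through the generating-function dictionary, this would force the hypergeometric series $u(X) \bmod p$ to be algebraic over $\mathbb{F}_p(X)$, contradicting the explicit product $P_n = \prod_{i=1}^n((4i-5)^2-5)$, which at such primes never acquires a zero factor. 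The truly hard step, and the one I expect to be the main obstacle, is the existence of $N-1$ consecutive zeros in the ``if'' direction: proving this uniformly in $p$ requires controlling the eventually-periodic dynamics of the reduced recurrence on the finite state space $\mathbb{F}_p^{N-1}$ and showing the unique absorbing orbit is the all-zero one. I expect this step to demand input beyond Theorem~\ref{Th9}, perhaps exploiting Lucas-periodicity of the binomial coefficients or a conserved quantity on the state space.
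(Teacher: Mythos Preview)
The statement you are attempting to prove is labeled \emph{Conjecture~\ref{Conj1}} in the paper, and the paper offers no proof of it: only the special cases $p=2$ (Corollary~\ref{C6}), $p=11$ (Corollary~\ref{C10}), and $p=5$ (Corollary~\ref{C11}) are established, together with Theorem~\ref{Th16} showing that $3\nmid B_n$ for $n>1$. So there is no ``paper's own proof'' to compare against, and your proposal should be read as an attempt to settle an open problem.

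Your integer identity
\[
P_n + 4^n B_n + \sum_{k=1}^{n-1} 4^k \binom{2n-1}{2k-1} B_k\, P_{n-k} = 0
\]
is correct (it follows exactly as you say from $f u + 2X u' = 0$), and it is a genuinely useful bridge between $\{B_n\}$ and $\{P_n\}$ that the paper does not write down. The resulting bounded-length recurrence for $B_n \bmod p$ when $p\mid P_N$, and the observation that the all-zero state is absorbing, are also correct.

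However, both directions remain open. In the ``if'' direction you yourself identify the gap: you need, for every prime $p\equiv\pm1\pmod 5$, a run of $N(p)-1$ consecutive indices with $p\mid B_i$. Computation in the spirit of Theorem~\ref{Th9} handles only finitely many primes, and nothing in your outline produces such a run uniformly in $p$; this is the heart of the conjecture and your proposal does not resolve it. In the ``only if'' direction your argument is actually flawed, not merely incomplete. Assuming $B_n\equiv 0$ for $n\ge M$ yields a linear recurrence for $P_n\bmod p$ with $n$-periodic coefficients, but this does \emph{not} force $u(X)\bmod p$ to be algebraic in any way that contradicts the product formula: by Lemma~\ref{L19} the sequence $P_n\bmod p$ is already purely periodic for these primes, so $u(X)\bmod p$ is rational and hence trivially algebraic over $\mathbb{F}_p(X)$. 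The contradiction you assert does not exist, and a different mechanism would be needed to rule out eventual vanishing of $B_n\bmod p$ for $p\equiv\pm2\pmod 5$.
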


Corollary~\ref{C6} proves that $B_n$ eventually becomes even. Corollary~\ref{C10} proves that $B_n$ eventually becomes divisible by 11, and Corollary~\ref{C11} proves that $B_n$ eventually becomes divisible by 5. Thus we have a proof for $p$ equal to 2, 5, and the smallest number in \seqnum{A045468}.

\subsection{Eventual Periodicity of $B_n$}

We saw that for many prime numbers, $B_n$ becomes divisible by them.  The following theorem shows that $B_n$ is never divisible by $3$. 

\begin{theorem}
\label{Th16}
For all $n>1$, $B_n\equiv 1 \pmod 3$ if $n\equiv 2 \pmod 3$ and $B_n\equiv 2 \pmod 3$ if $n\equiv 0$ or $1 \pmod 3$.
\end{theorem}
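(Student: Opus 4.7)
The plan is to prove this by strong induction on $n$, using the recursion of Theorem~\ref{Th8} reduced modulo $3$. The base cases $n=2,3,4$ are verified directly: $B_2=4\equiv 1$, $B_3=56\equiv 2$, and $B_4=1712\equiv 2\pmod 3$. For $n\ge 5$ we have $2n-2\ge 8$, so $(2n-2)!\equiv 0\pmod 3$, and Theorem~\ref{Th8} reduces to
$$B_n \;\equiv\; \sum_{m=1}^{n-1}\binom{2n-2}{2m-1}B_m B_{n-m}\pmod 3.$$
The inductive hypothesis, together with $B_1=1$, makes every factor $B_mB_{n-m}\pmod 3$ equal to $\pm 1$, so the task becomes the evaluation of this weighted sum.

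To handle the binomial coefficients I would invoke Lucas's theorem in base $3$. The least significant base-$3$ digits of $2n-2$ and $2m-1$ depend only on $n\bmod 3$ and $m\bmod 3$; stripping them off reduces $\binom{2n-2}{2m-1}\pmod 3$ to $\binom{N}{M}$ with $N=\lfloor(2n-2)/3\rfloor$ and $M=\lfloor(2m-1)/3\rfloor$. Grouping terms according to $m\bmod 3$ then converts the full sum into a signed combination of binomial coefficients $\binom{N}{M}$ in which $M$ ranges over a single arithmetic progression.

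I would then case-split on $n\bmod 3$. For instance, when $n=3k+1$ the congruence $2n-2\equiv 0\pmod 3$ forces $m\equiv 2\pmod 3$ in any surviving term; the hypothesis gives $B_mB_{n-m}\equiv 1$ throughout, and the sum collapses to $\sum_{j=0}^{k-1}\binom{2k}{2j+1}=2^{2k-1}\equiv -1\equiv 2\pmod 3$, as claimed. The cases $n\equiv 0$ and $n\equiv 2\pmod 3$ proceed analogously: the sum reduces to a signed combination of partial sums of odd- and even-indexed entries of the row $\binom{2k-1}{\cdot}$ or $\binom{2k}{\cdot}$, each such partial sum equals a power of $2$ up to a small correction from the endpoint entries, and the identity $2\equiv -1\pmod 3$ then produces residues $2$ and $1$ respectively.

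The main obstacle is bookkeeping at the endpoints. The value $B_1=1$ is congruent to $1\pmod 3$ rather than the $-1$ that the generic $m\equiv 1\pmod 3$ case of the claim would predict, so the terms $m=1$ and $m=n-1$ (where one factor of the product is $B_1$) must be extracted from the bulk and accounted for separately. Tracking these boundary contributions, together with which partial binomial sums require peeling off $\binom{2k}{0}$, $\binom{2k}{2k}$, or similar small corrections, is where the case analysis becomes delicate; once it is carried out, the arithmetic collapses to the claimed residue in each of the three residue classes.
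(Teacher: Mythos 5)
Your proposal is correct, and it takes a genuinely different route from the paper's proof. The paper also inducts on $n$ and splits into the three residue classes of $n$ modulo $3$, but it keeps all $n-1$ terms of the recursion of Theorem~\ref{Th8} and evaluates the resulting binomial sums through two auxiliary lemmas (Lemmas~\ref{L17} and~\ref{L18}) about sums such as $\binom{N}{1}+\binom{N}{7}+\cdots$ and $\binom{N}{5}+\binom{N}{11}+\cdots$ modulo $3$, which are themselves proved by induction via the identity $\binom{m}{i}\equiv\binom{m-3}{i-3}+\binom{m-3}{i}\pmod 3$ and a roots-of-unity filter evaluated modulo $9$. You instead apply Lucas's theorem in base $3$ at the outset: the last digit of $2n-2$ either annihilates an entire residue class of $m$ (when $n\equiv 0,1\pmod 3$) or assigns it a weight $\binom{2}{\cdot}\in\{1,2\}$ (when $n\equiv 2\pmod 3$), and every surviving coefficient collapses to an entry of the row $\binom{2k}{\cdot}$ or $\binom{2k-1}{\cdot}$, so the sums become complete odd- or even-index row sums, i.e.\ powers of $2$, up to the endpoint corrections you flag. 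This buys a shorter argument with fewer base cases ($n=2,3,4$ versus the paper's $2\le n\le 12$) and no arithmetic-progression lemmas, at the price of invoking Lucas's theorem and the delicate $B_1$/endpoint bookkeeping you identify; that bookkeeping does close in the two cases you did not write out: for $n=3k+2$ the weighted sum is $2\cdot 2^{2k-1}+2\cdot 2^{2k-1}+2\bigl(2^{2k-1}-2\bigr)+4+4\equiv 1\pmod 3$, and for $n=3k$ (where the $m\equiv 0$ class vanishes and the two $B_1$-terms each contribute $1$) it is $2\cdot 2^{2k-1}-2\equiv 2\pmod 3$, matching the claimed residues.
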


We will need two lemmas for this proof.
\begin{lemma}
\label{L17}
For all positive integers $n\equiv 2\pmod 6$ with $n>2$, \[\dbinom{n}{1}+\dbinom{n}{7}+\cdots +\dbinom{n}{n-1}\equiv 1\pmod 3.\]
\end{lemma}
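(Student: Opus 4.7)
The plan is to reduce $\binom{n}{k}$ modulo $3$ by using the Freshman's Dream congruence $(1+x)^3 \equiv 1+x^3 \pmod 3$. Writing $n = 6m+2$ with $m \ge 1$, this yields
\[(1+x)^{6m+2} = (1+x)^2 \cdot \bigl((1+x)^3\bigr)^{2m} \equiv (1+2x+x^2)(1+x^3)^{2m} \pmod 3.\]
The point of this factorization is that $(1+x^3)^{2m}$ contributes only powers of $x$ divisible by $3$, while $(1+2x+x^2)$ fixes the residue of the exponent modulo $3$.

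Next I would extract the coefficient of $x^k$ for $k \equiv 1 \pmod 6$. Writing $k = 6s+1$, the condition $k \equiv 1 \pmod 3$ forces the contribution to come solely from multiplying the $2x$ term of $(1+2x+x^2)$ with the $\binom{2m}{2s}x^{6s}$ term of $(1+x^3)^{2m}$, giving
\[\binom{6m+2}{6s+1} \equiv 2\binom{2m}{2s} \pmod 3.\]
Summing over $s = 0, 1, \ldots, m$, which exactly traverses $k = 1, 7, \ldots, n-1$, produces
\[\binom{n}{1}+\binom{n}{7}+\cdots+\binom{n}{n-1} \equiv 2\sum_{s=0}^{m}\binom{2m}{2s} \pmod 3.\]

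To finish, I would invoke the classical identity $\sum_{s=0}^{m}\binom{2m}{2s} = 2^{2m-1}$, valid for $m \ge 1$, which is the even-indexed half of the row sum of Pascal's triangle. This yields $2 \cdot 2^{2m-1} = 4^m \equiv 1^m \equiv 1 \pmod 3$, as desired. The main conceptual step is the reduction via Freshman's Dream; once that is in place the remainder is essentially algebraic bookkeeping. The hypothesis $n > 2$, equivalently $m \ge 1$, enters precisely because the even-indexed row-sum identity degenerates at $m=0$ (and indeed $\binom{2}{1} = 2 \not\equiv 1 \pmod 3$); beyond that edge case, no substantive obstacle is anticipated.
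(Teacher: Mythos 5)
Your proof is correct, and it takes a genuinely different route from the paper. You evaluate each binomial coefficient in closed form modulo $3$ via the Freshman's Dream factorization $(1+x)^{6m+2}\equiv(1+x)^2(1+x^3)^{2m}\pmod 3$ (essentially a Lucas-theorem argument), obtaining $\binom{6m+2}{6s+1}\equiv 2\binom{2m}{2s}$ and reducing the whole sum to $2\cdot 2^{2m-1}\equiv 1\pmod 3$ — a direct, non-inductive computation, with the hypothesis $n>2$ entering only through the degenerate case $m=0$. The paper instead argues by induction in steps of $6$: iterating Pascal's identity three times gives $\binom{m}{i}\equiv\binom{m-3}{i-3}+\binom{m-3}{i}\pmod 3$, which converts the sum for $n=k+6$ into a sum of $\binom{k+3}{i}$ over all $i\equiv 1\pmod 3$; that fuller sum is then evaluated by a roots-of-unity filter as $\frac{2^{k+3}-2}{3}$ and reduced modulo $3$ using $2^{k+3}\bmod 9$. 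Your argument is shorter and yields an explicit congruence for each individual term, while the paper's inductive telescoping has the advantage of running in parallel with the companion Lemma on $i\equiv 5\pmod 6$ and with the overall inductive structure of Theorem 16; either proof is complete.
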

\begin{proof}
We use induction on $n$.  For the base case of $n=8$, the result clearly holds.  Suppose it holds for some $n=k\ge 8$ with $k\equiv 2\pmod 6$; we prove the result for $n=k+6$.  Note that by Pascal's Identity, for all positive integers $m\ge 3$ and $i\ge 3$ and $m\ge i+3$, we have

\begin{equation}
\begin{split}
\dbinom{m}{i} & =\dbinom{m-1}{i-1}+\dbinom{m-1}{i} \\
              & =\dbinom{m-2}{i-2}+2\dbinom{m-2}{i-1}=\dbinom{m-2}{i} \\
              & =\dbinom{m-3}{m-3}+3\dbinom{m-3}{i-2}+3\dbinom{m-3}{i-1}+\dbinom{m-3}{i} \\
              & \equiv\dbinom{k-3}{i-3}+\dbinom{k-3}{i}\pmod 3.
\end{split}
\end{equation}

Applying this to $m=k+6$ and $i=7$, $13$, $\ldots$, $k-1$, we get 
\begin{multline*}
\dbinom{k+6}{1}+\dbinom{k+6}{7}+\cdots +\dbinom{k+6}{k+5}\\
\equiv 
\dbinom{k+3}{1}+\dbinom{k+3}{4}+\dbinom{k+3}{7}+\cdots +\dbinom{k+3}{k-1}+\dbinom{k+3}{k+2}\pmod 3.
\end{multline*}

By the roots of unity filter, we know that the RHS is equal to $\dfrac{2^{k+3}-2}{3}$.  Since $k\equiv 2\pmod 6$, we have $2^{k+3}\equiv 5\pmod 9$, so $\text{LHS}\equiv \text{RHS}\equiv \dfrac{2^{k+3}-2}{3}\equiv 1\pmod 3$, as desired.  Thus the inductive step is complete and the lemma is proven.
\end{proof}

\begin{lemma}
\label{L18}
For all positive integers $n\equiv 4\pmod 6$ with $n>4$, \[\dbinom{n}{5}+\dbinom{n}{11}+\cdots +\dbinom{n}{n-5}\equiv 0\pmod 3\]
\end{lemma}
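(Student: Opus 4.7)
The plan is to follow the strategy of the proof of Lemma~\ref{L17} without needing induction, since for $n \equiv 4 \pmod 6$ with $n > 4$ (hence $n \ge 10$) every index $i \in \{5, 11, \ldots, n-5\}$ already satisfies $i \ge 3$ and $n \ge i+3$. Thus the congruence
\[\binom{m}{i}\equiv \binom{m-3}{i-3}+\binom{m-3}{i}\pmod 3\]
established in the proof of Lemma~\ref{L17} applies to every term of our sum directly. After applying it termwise and regrouping, write $n = 6q+4$: the shifted indices form the arithmetic progression $\{2, 8, \ldots, 6q-4\}$ and the unshifted indices form $\{5, 11, \ldots, 6q-1\}$. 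These two progressions are exactly the complementary halves of the set $\{j : 0 \le j \le n-3,\ j \equiv 2 \pmod 3\}$, so the sum reduces to
\[\binom{n}{5}+\binom{n}{11}+\cdots+\binom{n}{n-5}\;\equiv\;\sum_{\substack{0\le j\le n-3\\ j\equiv 2\,(3)}}\binom{n-3}{j}\pmod 3.\]

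Next I will evaluate this by the standard roots-of-unity filter with $\omega = e^{2\pi i/3}$, using $1+\omega = -\omega^2$ and $1+\omega^2 = -\omega$ exactly as in Lemma~\ref{L17}. Because $n$ is even and $n \equiv 1 \pmod 3$, both auxiliary exponents of $\omega$ vanish modulo $3$ and the sign $(-1)^{n-3}$ equals $-1$, so the two complex terms collapse to $-1$ each and the sum equals $(2^{n-3} - 2)/3$. Finally, since the multiplicative order of $2$ modulo $9$ is $6$ and $n - 3 \equiv 1 \pmod 6$, we have $2^{n-3} \equiv 2 \pmod 9$, whence $(2^{n-3} - 2)/3 \equiv 0 \pmod 3$, which gives the lemma.

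The main obstacle, and really the only place where something can go wrong, is the index-set observation in the first paragraph: one must verify that the two arithmetic progressions coming out of the Pascal reduction assemble \emph{precisely} into the set of residue-$2 \pmod 3$ indices in $[0, n-3]$, with no gaps and no surplus zero terms (the point being that, for $n = 6q+4$, the largest index congruent to $2 \pmod 3$ within $[0,n-3]$ is $6q-1$, matching the top of the second progression, and the residue-$2 \pmod 6$ and residue-$5 \pmod 6$ classes partition the residue-$2 \pmod 3$ class). Once this bookkeeping is in place, the remaining roots-of-unity filter and the $\pmod 9$ computation run in parallel with Lemma~\ref{L17}.
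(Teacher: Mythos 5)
Your proof is correct and follows essentially the same route as the paper's: the step-three Pascal reduction modulo $3$, the regrouping into the full residue-$2\pmod 3$ class of indices for $\binom{n-3}{\,\cdot\,}$, the roots-of-unity evaluation $\frac{2^{n-3}-2}{3}$, and the final computation modulo $9$ using $n-3\equiv 1\pmod 6$ all match the paper. The only difference is that you apply the reduction directly to $n$ and drop the induction wrapper; since the paper's inductive hypothesis is never actually invoked in its own step, this is a slight streamlining rather than a genuinely different argument.
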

\begin{proof}
We use induction on $n$.  For the base case of $n=10$, the result clearly holds.  Suppose it holds for some $n=k\ge 10$ with $k\equiv 4\pmod 6$; we prove the result for $n=k+6$.  As shown in Lemma \ref{L17}, we have that for all positive integers $m\ge 3$ and $i\ge 3$ and $m\ge i+3$, $\dbinom{m}{i}\equiv\dbinom{m-3}{i-3}+\dbinom{m-3}{i}\pmod 3$.  Applying this to $m=k+6$ and $i=5$, $11$, $\ldots$, $k+1$, we get 
\begin{multline*}
\dbinom{k+6}{5}+\dbinom{k+6}{11}+\cdots +\dbinom{k+6}{k+1}\\
\equiv \dbinom{k+3}{2}+\dbinom{k+3}{5}+\dbinom{k+3}{8}+\cdots +\dbinom{k+3}{k-2}+\dbinom{k+3}{k+1}\pmod 3.
\end{multline*}

By the roots of unity filter, we know that the RHS is equal to $\dfrac{2^{k+3}-2}{3}$.  Since $k\equiv 4\pmod 6$, we have $2^{k+3}\equiv 2\pmod 9$, so $\text{LHS}\equiv \text{RHS}\equiv \dfrac{2^{k+3}-2}{3}\equiv 0\pmod 3$, as desired.  Thus the inductive step is complete and the lemma is proven.
\end{proof}

Now we are ready to prove Theorem~\ref{Th16}.

\begin{proof}[Proof of Theorem~\ref{Th16}]
We use induction on $n$.  The result can be verified by calculation for all $2\le n\le 12$.  We now prove the result for $n$ with three cases, in each of which we assume the result for all numbers less than $n$.

{\bfseries Case 1.} $n\equiv 1 \pmod 3$.

Note that $3|(2n-2)!$, so we can ignore the factorial term.  We now wish to compute $\sum_{m=1}^{n-1} {2n-2 \choose 2m-1}B_mB_{n-m} \pmod 3$.  By the inductive hypothesis, we have that $B_1, B_2, \ldots, B_{n-1} \equiv 1, 1, 2, 2, 1, 2, 2, \ldots, 1, 2, 2, 1, 2 \pmod 3$ respectively.  Pairing these up appropriately, we get that $B_1B_{n-1}\equiv 2 \pmod 3$, and for $2\le i\le n-2$, $B_iB_{n-i}\equiv 1 \pmod 3$.  Thus, we have 
\[
\begin{aligned}
B_n & \equiv \dbinom{2n-2}{1}+\dbinom{2n-2}{2n-1}+\sum_{m=1}^{n-1} {2n-2 \choose 2m-1}\\
&=4n-4+\frac{1}{2}\cdot 2^{2n-2}\equiv 2^{2n-1}\equiv 2 \pmod 3,
\end{aligned}\]
where we used the fact that the sum of the odd binomial coefficients is equal to the sum of the even binomial coefficients.  

{\bfseries Case 2.} $n\equiv 2 \pmod 3$.

Note that $3|(2n-2)!$, so we can ignore the factorial term.  We now wish to compute $\sum_{m=1}^{n-1} {2n-2 \choose 2m-1}B_mB_{n-m} \pmod 3$.  By the inductive hypothesis, we have that $B_1, B_2, \ldots, B_{n-1} \equiv 1, 1, 2, 2, 1, 2, 2, \ldots, 1, 2, 2, 1, 2, 2 \pmod 3$ respectively.  Pairing these up appropriately, we get that for $i\equiv 1 \pmod 3$ and $4\le i\le n-4$, $B_iB_{n-i}\equiv 1\pmod 3$ and $B_iB_{n-i}\equiv 2\pmod 3$ otherwise.  Thus we have modulo 3
\[
\begin{split}
B_n & \equiv 2\sum_{m=1}^{n-1}\dbinom{2n-2}{2m-1}+\dbinom{2n-2}{1}+\dbinom{2n-2}{2n-3}-\sum_{\substack{1\le i\le 2n-3 \\ i \equiv 1 \pmod{6}}}\dbinom{2n-2}{i} \\
    & \equiv 2^{2n-2}+4n-4-\left(\dbinom{2n-2}{5}+\dbinom{2n-2}{11}+\cdots +\dbinom{2n-2}{2n-7}\right) \\
    & \equiv 2-\left(\dbinom{2n-2}{1}+\dbinom{2n-2}{7}+\cdots +\dbinom{2n-2}{2n-3}\right) \\
    & \equiv 1,
\end{split}
\]
with the last congruence following from Lemma~\ref{L17}.

{\bfseries Case 3.} $n\equiv 0 \pmod 3$.

Note that $3|(2n-2)!$, so we can ignore the factorial term.  We now wish to compute $\sum_{m=1}^{n-1} {2n-2 \choose 2m-1}B_mB_{n-m} \pmod 3$.  By the inductive hypothesis, we have that $B_1, B_2, \ldots, B_{n-1} \equiv 1, 1, 2, 2, 1, 2, 2, \ldots, 1, 2, 2, 1 \pmod 3$ respectively.  Pairing these up appropriately, we get that for $i=1, n-1$ and $i\equiv 0\pmod 3$, $B_iB_{n-i}\equiv 1\pmod 3$ and $B_iB_{n-i}\equiv 2\pmod 3$ otherwise.  Thus we have modulo 3
\[\begin{aligned}
B_n & \equiv 2\sum_{m=1}^{n-1}\dbinom{2n-2}{2m-1}-\dbinom{2n-2}{1}-\dbinom{2n-2}{2n-3}-\sum_{\substack{5\le i\le 2n-4 \\ i \equiv 5\mod 6}}\dbinom{2n-2}{i} \\
    & \equiv 2^{2n-2}+4n-4-\left(\dbinom{2n-2}{5}+\dbinom{2n-2}{11}+\cdots +\dbinom{2n-2}{2n-7}\right) \\
    & \equiv 2-\left(\dbinom{2n-2}{5}+\dbinom{2n-2}{11}+\cdots +\dbinom{2n-2}{2n-7}\right) \\
    & \equiv 0,
\end{aligned}\]
with the last congruence following from Lemma~\ref{L18}.

Thus for every remainder modulo 3, the theorem is true.
\end{proof}

We know that the sequence $B_n$ becomes divisible by certain primes for sufficiently large indices.  We know that $P_n$ becomes divisible by the same primes. This motivates us to consider the divisibility properties of both sequences, which in turn leads us to questioning the behavior of $P_n$ modulo any prime.

\begin{lemma}
\label{L19}
The sequence $P_n$ considered modulo prime $p$ eventually becomes periodic.
\end{lemma}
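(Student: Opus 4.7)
The plan is to exploit the obvious multiplicative recursion $P_n = P_{n-1}\cdot((4n-5)^2 - 5)$, which is immediate from the definition of $P_n$. Write $r(n) := (4n-5)^2 - 5$ for the new factor introduced at step $n$. Since $r(n)\pmod p$ depends only on $n\pmod p$, the sequence $r(1), r(2), r(3), \ldots$ is periodic modulo $p$ with period dividing $p$. Once this is established, the behavior of $P_n\pmod p$ is completely controlled by the running product of a periodic sequence in $\mathbb{F}_p$.

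I would then split into two cases. In the first case, some $r(i) \equiv 0 \pmod p$ for $1 \le i \le p$; then $P_n \equiv 0\pmod p$ for every $n \ge i$, so the sequence is eventually identically zero and trivially periodic. This is exactly the situation already analyzed in Lemma~\ref{L15}, covering $p = 2$, $p = 5$, and the odd primes modulo which $5$ is a quadratic residue.

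In the remaining case every factor $r(i)$ lies in $\mathbb{F}_p^{*}$. Set $c := \prod_{i=1}^{p} r(i) \pmod p$. Periodicity of $r$ mod $p$ gives $\prod_{i=n+1}^{n+p} r(i)\equiv c\pmod p$ for every $n\ge 0$, hence $P_{n+p}\equiv c\cdot P_n\pmod p$. Iterating yields $P_{n+kp}\equiv c^k P_n\pmod p$. Since the multiplicative order $d$ of $c$ in the finite group $\mathbb{F}_p^{*}$ divides $p-1$, we obtain $P_{n+dp}\equiv P_n\pmod p$ for all $n$, so $P_n\pmod p$ is in fact purely periodic with period dividing $p(p-1)$.

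The proof is essentially a one-step consequence of the finiteness of $\mathbb{F}_p^{*}$, so there is no substantial obstacle. The only minor point needing comment is that the ``converges to the all-zero sequence'' case of Lemma~\ref{L15} indeed qualifies as eventual periodicity, which is clear upon viewing the identically-zero sequence as having period $1$. A side benefit of the argument is the explicit period bound $p(p-1)$, which could be tightened in the nonvanishing case by computing $c$ via $\prod_{x\in\mathbb{F}_p}(x^2-5)\equiv -20\pmod p$ (a standard resultant calculation), but that refinement is not needed for the lemma as stated.
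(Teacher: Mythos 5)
Your proposal is correct and follows essentially the same route as the paper: handle the primes where some factor $(4i-5)^2-5$ vanishes mod $p$ (the $p=2,5,\pm 1\pmod 5$ case of Lemma~\ref{L15}) separately, and otherwise use the period-$p$ behavior of the factors together with Fermat's little theorem (equivalently, the order of $c$ in $\mathbb{F}_p^{*}$) to get $P_{n+p(p-1)}\equiv P_n\pmod p$. Your phrasing via the multiplicative order of the block product $c$ is a mild refinement but not a different argument.
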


\begin{proof}
Recall that \[
P_n = \prod_{i=1}^n ((4i-5)^2 - 5).
\]By Lemma~\ref{L15}, the desired statement is true for $p=2$, $5$, and $p\equiv\pm 1\pmod 5$.  For all other $p$, we claim that $P_n\equiv P_{n+p(p-1)}$ for all $n$.  Indeed, since $(4i-5)^2-5\equiv (4(i+p)-5)^2-5\pmod p$ for all integers $i$, we know that the remainder when $\prod_{i=k}^{k+p-1} ((4i-5)^2 - 5)$ is divided by $p$ is fixed for all positive integers $k$.  Furthermore, from the proof of Lemma~\ref{L15}, this is not $0$ for these primes.  Letting this remainder be $r$, by Fermat's Little Theorem, we have $r^{p-1}\equiv 1\pmod p$, so $P_{n+p(p-1)}\equiv r^{p-1}P_n\equiv P_n\pmod p$, as desired.
\end{proof}

This argument can be extended to show that $P_n$ eventually becomes periodic when considered modulo any positive integer $m$.

\begin{lemma}
\label{L20}
The sequence $P_n$ considered modulo any positive integer $m$ eventually becomes periodic.
\end{lemma}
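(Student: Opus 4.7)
The plan is to extend the argument in Lemma~\ref{L19} from a prime $p$ to an arbitrary positive integer $m$, with one genuine new difficulty: when $m$ is composite, the residue $r$ obtained from a block product need not be a unit modulo $m$, so we cannot invoke Fermat/Euler directly. The remedy is to replace the multiplicative-order step with a pigeonhole argument on the sequence $r^k \pmod m$.

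First I would observe that the factor $(4i-5)^2 - 5$, viewed as a function of $i$, is periodic modulo $m$ with period $m$, simply because $4(i+m)-5 \equiv 4i-5 \pmod{m}$. Consequently, for every positive integer $k$ the product
\[
\prod_{i=k}^{k+m-1}\bigl((4i-5)^2 - 5\bigr) \pmod m
\]
does not depend on $k$; call this common residue $r$. Splitting $P_{n+m}$ as $P_n$ times the block product from $i=n+1$ to $i=n+m$, we obtain the key recurrence
\[
P_{n+m} \equiv r\, P_n \pmod m \qquad (n\ge 0).
\]

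Next I would analyze the $m$ subsequences $(P_{n_0 + km})_{k \ge 0}$ for $n_0 \in \{1, 2, \ldots, m\}$. By the recurrence, each satisfies $a_{k} \equiv r^k a_0 \pmod m$, so it suffices to show that $(r^k \bmod m)_{k\ge 0}$ is eventually periodic. This follows from pigeonhole without any unit assumption: the sequence takes values in the finite set $\mathbb{Z}/m\mathbb{Z}$, and since $r^{k+1}$ is determined by $r^k$, any collision $r^i \equiv r^j \pmod m$ with $i < j$ (which must exist by pigeonhole) propagates to give $r^{i+\ell} \equiv r^{j+\ell} \pmod m$ for every $\ell \ge 0$, making the sequence eventually periodic with period dividing $j-i$.

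Finally, I would combine the $m$ eventually periodic subsequences into a single eventually periodic sequence. If subsequence $n_0$ becomes periodic with period $T_{n_0}$ from some index onward, then the full sequence $(P_n \bmod m)$ is eventually periodic with period $m \cdot \mathrm{lcm}(T_1, \ldots, T_m)$, starting from a sufficiently large index. The main obstacle, as noted, is the possibility that $\gcd(r,m) > 1$, which is exactly why the pigeonhole step on $r^k$ replaces the Fermat's Little Theorem step used in Lemma~\ref{L19}; once that replacement is made, the rest of the argument is a routine bookkeeping of the $m$ interleaved subsequences.
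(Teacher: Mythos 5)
Your proof is correct, but it takes a genuinely different route from the paper's. The paper first reduces to prime-power moduli $p^t$: there it splits into two cases --- either the length-$p$ block product is divisible by $p$, in which case $P_n$ eventually becomes $0 \pmod{p^t}$, or it is not, in which case Euler's totient theorem gives $r^{\phi(p^t)}\equiv 1$ and hence an explicit period --- and then assembles a general modulus $m$ via the Chinese Remainder Theorem. You instead work directly modulo $m$: the single recurrence $P_{n+m}\equiv rP_n \pmod m$ (valid because any $m$ consecutive indices form a complete residue system mod $m$), combined with the pigeonhole observation that $(r^k \bmod m)_{k\ge 0}$ is eventually periodic, treats units and non-units uniformly, so you need no case split, no prime factorization, and no CRT. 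What the paper's route buys is explicit period information (essentially $\phi(p^t)$ per prime power), which is what feeds the quantitative expectation in Conjecture~\ref{Conj3}; what your route buys is simplicity and robustness --- in particular it sidesteps the delicate point that a block of length $p$ has a well-defined residue only modulo $p$, not modulo $p^t$, a point the paper's write-up glosses over when it raises its $r$ to the power $\phi(p^t)$ modulo $p^t$. One small remark: since $P_{n+m}\equiv rP_n$ holds for every $n$, a single collision $r^i\equiv r^j \pmod m$ already yields the uniform period $m(j-i)$ for all sufficiently large $n$, so your final $\mathrm{lcm}$ bookkeeping over the $m$ interleaved subsequences, while correct, is not actually needed.
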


\begin{proof}
We first prove the statement for all prime powers $p^t$.  As in the proof of Lemma~\ref{L19}, if $\prod_{i=1}^{p} ((4i-5)^2 - 5)$ is divisible by $p$, then $\prod_{i=k}^{k+p-1} ((4i-5)^2 - 5)$ is divisible by $p$ for all $k$, so $P_{tp}$ will be divisible by $p^t$, and thus every term of the sequence afterwards will also be divisible by $p^t$.  If the remainder when $\prod_{i=k}^{k+p-1} ((4i-5)^2 - 5)$ is divided by $p$ is not 0, then let it be $r$.  By Euler's totient theorem, we have $r^{\phi{p^t}}=r^{(p-1)p^{t-1}}\equiv 1\pmod p^t$, giving $P_{n+(p-1)p^{t-1}}\equiv r^{(p-1)p^{t-1}}P_n\equiv P_n \pmod p^t$ for all positive integers $n$, completing the statement for prime powers.  Thus, splitting each positive integer $m$ into a product of prime powers, the sequence $P_n$ becomes divisible by some of them and is periodic modulo the rest of them, beginning from $P_1$.  Thus, by the Chinese Remainder Theorem, multiplying all the periods together gives that $P_n$ is periodic $\pmod m$ for sufficiently large $n$, as desired.
\end{proof}

The eventual periodicity of $P_n$ makes us expect the eventual periodicity of $B_n$. In addition, we have further computational evidence supporting eventual periodicity modulo any number. Thus we have the following conjecture:

\begin{conjecture}
\label{Conj2}
For any positive integer $m$, for sufficiently large indices, the sequence $B_n$ becomes periodic modulo $m$.
\end{conjecture}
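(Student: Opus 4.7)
The plan is to reduce the conjecture to prime powers via the Chinese Remainder Theorem and then prove eventual periodicity of $B_n$ modulo each prime power $p^t$ separately. Two tools are available: the quadratic recursion of Theorem~\ref{Th8}, and the algebraic identity $2X u'(X) + f(X) u(X) = 0$ relating $f$ to the hypergeometric function $u$, together with the eventual periodicity of $P_n \pmod{p^t}$ already secured in Lemma~\ref{L20}.

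First I would use Legendre's formula to note that $\nu_p\bigl((2n-2)!\bigr) \ge t$ for all sufficiently large $n$, so the factorial term in Theorem~\ref{Th8} drops out modulo $p^t$ and the recursion simplifies to
\[B_n \equiv \sum_{k=1}^{n-1} \binom{2n-2}{2k-1}\, B_k\, B_{n-k} \pmod{p^t}.\]
Next, I would convert the identity $2X u'(X) = -f(X) u(X)$ into a \emph{linear} convolution identity. Writing $c_n = P_n/(4^n (2n)!)$ for the coefficients of $u$, comparison of coefficients and multiplication by $(2n-1)!$ produces a relation of the form
\[\sum_{k=1}^{n} \alpha_{n,k}\, B_k \;=\; \beta_n,\]
where $\alpha_{n,k}$ is built from ratios of consecutive $P_j$'s and binomial coefficients $\binom{2n-1}{2k-1}$, and $\beta_n$ is a simple expression in $P_n$. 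The value of this reformulation is that, modulo $p^t$, each $\alpha_{n,k}$ combines two sequences whose behavior I already understand: $P_n$ is eventually periodic in $n$ by Lemma~\ref{L20}, and the binomial coefficients mod $p^t$ are controlled by Lucas' theorem (and Granville's extension to prime powers) via the base-$p$ digits of $2n-1$ and $2k-1$.

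The plan would then be to choose a common period $T$ for the eventual behavior of $P_n \pmod{p^t}$ and of the relevant digit patterns in $n$, and to argue that the difference sequence $D_n := B_{n+T} - B_n$ satisfies a \emph{homogeneous} version of the same linear convolution mod $p^t$. One then hopes to reduce to a finite-state pigeonhole argument showing that $D_n \equiv 0 \pmod{p^t}$ for all large enough $n$.

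The main obstacle, and the reason this remains a conjecture, is the infinite-memory nature of the convolution: $B_n$ depends on all of $B_1, \ldots, B_{n-1}$, so no naive pigeonhole on a finite state space applies. Closing this gap requires showing that the effective memory is in fact bounded mod $p^t$, presumably by combining Kummer's theorem (which forces $\binom{2n-2}{2k-1} \equiv 0 \pmod{p^t}$ for most $k$ when the base-$p$ digits of $2n-2$ are sparse) with the eventual periodicity of $P_n$, to collapse the convolution tail into a window of bounded length. Carrying out this collapse rigorously, uniformly in $n$, is the step I expect to be the central difficulty.
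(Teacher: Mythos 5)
The statement you are addressing is Conjecture~\ref{Conj2}: the paper does not prove it, and offers only the eventual periodicity of $P_n$ (Lemma~\ref{L20}) together with computational evidence as motivation. Your proposal is likewise not a proof, as you acknowledge yourself, so the verdict has to be that a genuine gap remains. To give credit where due: the CRT reduction to prime powers is fine, the observation that $\nu_p((2n-2)!)\ge t$ for large $n$ correctly removes the factorial term from Theorem~\ref{Th8} modulo $p^t$, and your linearization idea is sound and goes beyond what the paper states. Indeed, comparing coefficients in $2Xu'+fu=0$ (Theorem~\ref{Th14}) and clearing denominators gives the integral identity
\[\sum_{k=1}^{n} 4^{k}\binom{2n-1}{2k-1}P_{n-k}B_k \;=\; -P_n \qquad (P_0=1),\]
which for odd $p$ expresses $4^nB_n$ modulo $p^t$ linearly in $B_1,\dots,B_{n-1}$ with coefficients built from the eventually periodic sequence $P_j$ and binomial coefficients. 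This is a correct and potentially useful reformulation.

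The gap is exactly the one you name, and the mechanism you propose for closing it does not work. First, the ``common period $T$'' does not exist: $P_n \bmod p^t$ is eventually periodic in $n$ with a fixed period, but $\binom{2n-1}{2k-1}\bmod p^t$ depends on the full base-$p$ digit strings of $2n-1$ and $2k-1$, and these digit patterns are not eventually periodic in $n$ (only the low-order digits are), so $D_n=B_{n+T}-B_n$ does not satisfy a homogeneous version of the same relation. Second, the hoped-for collapse of the convolution to a bounded window via Kummer's theorem fails uniformly in $n$: by Kummer, $\binom{2n-1}{2k-1}\not\equiv 0\pmod p$ exactly when adding $2k-1$ and $2n-2k$ in base $p$ produces no carry, and the number of such $k$ is on the order of $\prod_i(d_i+1)$, where $d_i$ are the base-$p$ digits of $2n-1$. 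Along subsequences of $n$ whose digits are large this quantity is unbounded, so for infinitely many $n$ arbitrarily many terms of the convolution survive modulo $p$, and no window of bounded length can capture the effective memory for all large $n$. Since the recursion (quadratic or linearized) genuinely has unbounded memory modulo $p^t$, no finite-state pigeonhole argument applies, and the central step of your plan is missing --- which is consistent with the statement remaining a conjecture in the paper.
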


We also ran a lot of computational experiments that allowed us to make the following conjecture.  This is supported by Lemma~\ref{L19}, assuming a connection between the behavior of $B_n$ and $P_n$.

\begin{conjecture}
\label{Conj3}
If $p$ is not $2$, $5$, or equivalent to $\pm 1\pmod 5$, then the sequence $B_n$ modulo $p$ repeats with a period divisible by $p(p-1)$.
\end{conjecture}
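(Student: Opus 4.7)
The plan is to use the generating-function identity $u(X) f(X) = -2X u'(X)$ from Lemma~\ref{L12} to transfer periodicity between the sequences $\{P_n\}$ and $\{B_n\}$ modulo $p$, combined with a sharpening of Lemma~\ref{L19} that pins down the exact minimal period of $P_n$ mod $p$.

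First I would compute the block product $r := \prod_{i=1}^p((4i-5)^2 - 5) \pmod p$ explicitly. After the substitution $j = 4i - 5$, which is a bijection of $\mathbb{F}_p$ for odd $p$, this becomes $r \equiv \prod_{j \in \mathbb{F}_p}(j^2 - 5) \pmod p$. Factoring $j^2 - 5 = (j - \sqrt 5)(j + \sqrt 5)$ over $\mathbb{F}_{p^2}$ and applying the identity $\prod_{j \in \mathbb{F}_p}(T - j) = T^p - T$, together with the Frobenius action $\sqrt 5^{\,p} = -\sqrt 5$ (valid precisely when $5$ is a non-residue, i.e., $p \not\equiv \pm 1 \pmod 5$), I expect the evaluation to give $r \equiv -20 \pmod p$. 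Since the quadratic $g(j) = (4j-5)^2 - 5$ has minimal period exactly $p$ in $j$ modulo $p$ (its leading coefficient $16$ is a unit), any period of $P_n$ modulo $p$ must be a multiple of $p$, and the minimal period of $P_n$ mod $p$ is therefore exactly $p \cdot \mathrm{ord}_p(-20)$. A necessary arithmetic input for the stated form of the conjecture is then that $\mathrm{ord}_p(-20) = p-1$, which may force a weakening of the conclusion to ``period divisible by $p \cdot \mathrm{ord}_p(-20)$''.

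Second, I would extract coefficients from $uf = -2Xu'$ to obtain the recurrence
\[
-\frac{2n\, P_n}{4^n(2n)!} = \sum_{k=1}^n \frac{P_{n-k}}{4^{n-k}(2n-2k)!} \cdot \frac{B_k}{(2k-1)!}.
\]
After clearing denominators, this becomes an integer identity $\alpha_n P_n = \sum_k \beta_{n,k} P_{n-k} B_k$. Conceptually, since $f = -2X(\log u)'$, the sequence $B_n$ controls $\log u$ while $P_n$ controls $u$, and the formal exponential/logarithm should translate the period of one into the period of the other. Reducing the integer identity modulo $p$ and inducting on $n$, any period $T$ of $\{B_n \bmod p\}$ should propagate through the recurrence to produce a matching period of $\{P_n \bmod p\}$, forcing $p \cdot \mathrm{ord}_p(-20) \mid T$.

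The main obstacle is the $p$-adic analysis of this reduction: the denominators $(2n)!$ become divisible by $p$ once $2n \ge p$, and the leading factor $\alpha_n$ above vanishes mod $p$ at many indices, so the recurrence cannot be na\"ively inverted. I would address this by tracking $p$-adic valuations carefully, matching them against Theorem~\ref{Th5} and Corollary~\ref{C6} to rescale both sides so that the identity descends to $\mathbb{F}_p$ at each step. The delicate technical heart will be showing that the reverse direction (lower-bounding the period of $B_n$ by that of $P_n$) survives this rescaling; one option is to work with the logarithmic-derivative relation modulo successive $p$-powers and use Hensel-style lifting, while another is to identify a specific index $n \equiv 0 \pmod{p-1}$ at which $B_n \not\equiv B_{n+T'} \pmod p$ for any $T' < p(p-1)$, by exploiting the $-20 \equiv r$ value directly in a closed-form residue computation.
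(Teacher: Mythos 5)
This statement is a conjecture: the paper offers no proof of it, only computational evidence and the analogy with Lemma~\ref{L19}, so there is no argument of the authors' to compare yours against. Judged on its own, your first ingredient is correct and genuinely worthwhile: the evaluation $\prod_{j\in\mathbb{F}_p}(j^2-5)=-\bigl((\sqrt{5})^p-\sqrt{5}\bigr)^2\equiv -20\pmod p$ when $5$ is a nonresidue does sharpen Lemma~\ref{L19} to the statement that the eventual minimal period of $P_n$ bmod $p$ is exactly $p\cdot\mathrm{ord}_p(-20)$, and you are right that this need not equal $p(p-1)$. In fact your refinement exposes a problem with the conjecture as stated: for $p=3$ one has $-20\equiv 1\pmod 3$, so $P_n$ has period $3$, and the paper's own Theorem~\ref{Th16} shows $B_n\bmod 3$ has minimal period $3$ as well, not $p(p-1)=6$. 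So under the ``minimal period'' reading the conjecture already fails at $p=3$, and your proposed weakening to $p\cdot\mathrm{ord}_p(-20)$ is the right correction (under the weaker reading ``$B_{n+p(p-1)}\equiv B_n$ eventually,'' the statement is instead an upper-bound assertion, which your lower-bound argument would not address at all).

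The second half, however, is where the entire difficulty lives, and it is not carried out. The coefficient identity $-2nu_n=\sum_{k=1}^n u_{n-k}f_k$ with $u_n=P_n/(4^n(2n)!)$ and $f_k=B_k/(2k-1)!$ is an identity over $\mathbb{Q}$; once $2n\ge p$ the denominators acquire unbounded powers of $p$, and after clearing them the coefficients $\alpha_n$, $\beta_{n,k}$ vanish modulo $p$ at infinitely many indices, so the identity neither descends to a recurrence over $\mathbb{F}_p$ nor can be inverted to recover $B_n$ from the $P$'s. ``Track $p$-adic valuations and rescale'' is not yet an argument: you would need exact matching of $\nu_p$ on both sides at every $n$, and the only valuation bounds available in the paper (Theorem~\ref{Th5}, Corollary~\ref{C6}) concern $\nu_2$ only, so they give nothing for odd $p$. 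Moreover, even granting a clean reduction, the coefficients $\beta_{n,k}$ depend on $n$ and not just on $k$ or $n-k$, so a period of $\{B_k\}$ does not formally propagate to one of $\{P_n\}$ without a separate (Lucas-type) periodicity analysis of those coefficients; and the direction you actually need --- deducing the period of $B$ from that of $P$ --- is the ill-posed inverse problem. In short: the proposal contains a correct and useful sharpening of Lemma~\ref{L19} and a sensible diagnosis of what the conjecture should say, but the transfer step from $P_n$ to $B_n$ is precisely the open heart of the problem and remains a gap.
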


\section{Acknowledgements}
We would like to thank Prof.~James Propp for the inspiration behind this paper and several useful discussions. Also, we are grateful to MIT PRIMES for providing the opportunity to conduct this research. Finally, we would like to thank Prof.~Pavel Etingof for a helpful discussion.

\bigskip
\hrule
\bigskip

\noindent 2010 {\it Mathematics Subject Classification}: Primary 11B99.

\noindent \emph{Keywords: } generating functions, hypergeometric functions, periodicity, p-adic order, recursion, Riccati equations.

\bigskip
\hrule
\bigskip

\noindent
(Mentions \seqnum{A000142}, \seqnum{A045468}. New sequences \seqnum{A257281}, \seqnum{A261746}, \seqnum{A261747}, \seqnum{A261964}.)

\end{document}